\numberwithin{equation}{section}
\newtheorem{thm}{Theorem}
\newtheorem{prop}[thm]{Proposition}
\theoremstyle{definition}
\theoremstyle{remark}
\newtheorem{rem}[thm]{Remark}
\def\R{\mathbb R}
\def\N{\mathbb N}
\def\SS{\mathbb S}
\def\pt{\partial}
\begin{document}
\title[]{On shape optimization for fourth order Steklov eigenvalue problems}
\author[C.~Xiong]{Changwei~Xiong}
\address{School of Mathematics, Sichuan University, Chengdu 610065, Sichuan,  P.~R.~China}
\email{\href{mailto:changwei.xiong@scu.edu.cn}{changwei.xiong@scu.edu.cn}}
\author[J.~Yang]{Jinglong~Yang}
\address{School of Physics, Huazhong University of Science and Technology, Wuhan 430074, Hubei,  P.~R.~China}
\email{\href{mailto:yangjinglong@hust.edu.cn}{yangjinglong@hust.edu.cn}}
\author[J.~Yu]{Jinchao~Yu}
\address{School of Mathematics, Sichuan University, Chengdu 610065, Sichuan,  P.~R.~China}
\email{\href{mailto:jcyu97@126.com}{jcyu97@126.com}}
\date{\today}
\thanks{This research was supported by NSFC (Grant no. 12171334) and National Key R and D Program of China 2021YFA1001800.}
\subjclass[2010]{{35P15}, {58C40}, {58J50}, {35J40}}
\keywords{Shape optimization; Fourth-order Steklov eigenvalue; Euclidean annular domain; Isoperimetric inequality}

\maketitle

\begin{abstract}
We study three types of fourth-order Steklov eigenvalue problems. For the first two of them, we derive the asymptotic expansion of their spectra on Euclidean annular domains $\mathbb{B}^n_1\setminus \overline{\mathbb{B}^n_\epsilon}$ as $\epsilon \to 0$, leading to conclusions on shape optimization. For these two problems, we also compute their spectra on cylinders over closed Riemannian manifolds. Last, for the third problem, we obtain a sharp upper bound for its first non-zero eigenvalue on star-shaped and mean convex Euclidean domains.
\end{abstract}

\section{Introduction}\label{sec1}

The second-order Steklov eigenvalue problem is a classical problem in the differential geometry and the partial differential equations, first introduced by Steklov \cite{SW02} around 1900. See \cite{CGGS24,GP17,KKK14} for an overview on this problem. Different from the well-known Laplace eigenvalue problem, the parameter (eigenvalue) of the Steklov eigenvalue problem appears on the boundary. Regarding this eigenvalue problem, the estimate on its eigenvalues is one of the main research topics in recent years. In particular, people are concerned with the shape optimization of Steklov eigenvalue problems. For instance, for the sharp upper bound of the first non-zero Steklov eigenvalue,  there are the classical Weinstock--Brock type results \cite{Wei54,Bro01} for Euclidean domains. Recently, an upper bound in the same spirit for convex Euclidean domains was given by Bucur, Ferone, Nitsch, and Trombetti \cite{BFNT}. Later, Kwong and Wei \cite{KW23} were able to extend the result in \cite{BFNT} to star-shaped and mean convex Euclidean domains. On the other hand, without the convexity condition, Fraser and Schoen \cite{FS19} in 2019 concluded that among all smooth contractible Euclidean domains of the dimension $n\geq 3$ with a fixed boundary area, the ball is not the one that maximizes the first non-zero Steklov eigenvalue. This makes the shape optimization for contractible Euclidean domains of the dimension $n\geq 3$ more complicated.

In this paper, we study some shape optimization problems for three types of fourth-order Steklov eigenvalue problems. The first two of them, one with the Neumann boundary condition and the other with the Dirichlet boundary condition, are classical and quite well-known. They were first introduced and studied by Kuttler and Sigillito \cite{KS68} in 1968 (see also Payne's work \cite{P70} in 1970), have important applications in biharmonic analysis and elasticity, and have received continuing attention from mathematicians; see \cite{Xio21,Xio22} and references therein for an introduction. In contrast, the third one was studied more recently, introduced first by Buoso and Provenzano \cite{BP15} in 2015. We shall study these three problems case by case.

Let $(\Omega^n, g)$ $(n \geq 2)$ be an $n$-dimensional connected compact smooth Riemannian manifold with a smooth boundary $\partial \Omega$. In the first part of the paper we consider the fourth-order Steklov eigenvalue problem of the type one:
\begin{equation}\label{problem1}
	\begin{cases}
		\Delta ^{2}u =0,&\text{ in }  \Omega, \\
		\dfrac{\partial u }{\partial \nu }=0, \ \dfrac{\partial \left ( \Delta u  \right )}{\partial \nu }+\xi u =0,&\text{ on }  \partial \Omega,
	\end{cases}
\end{equation}
where $\nu$ denotes the outward unit normal to $\partial \Omega$. The spectrum of the problem $\eqref{problem1}$ consists of a discrete and increasing sequence of eigenvalues counted with the multiplicity
\begin{align*}
	0=\xi_{0}(\Omega)<\xi_{1}(\Omega) \leq \xi_{2}(\Omega) \leq \cdots \nearrow +\infty.
\end{align*}
In contrast, we use $\xi_{(k)}(\Omega)$ to denote the eigenvalues without the multiplicity. (Similar notations apply to other eigenvalues in this paper.) And the eigenvalue $\xi_{k}$ admits the variational characterization:
\begin{align*}
	\xi_{k}=\inf_{u \in H^{2}(\Omega), \pt_\nu u|_{\pt \Omega}=0, u|_{\pt \Omega}\neq 0, \atop \int_{\partial \Omega} u u_{i} d a_{g}, i=0,1,\cdots,k-1}
	\dfrac{\int_{\Omega} \left(\Delta u\right)^{2}  d v_{g}}{\int_{\partial \Omega}  u^{2} d a_{g}},
\end{align*}
where $u_{i}$ is the $i$th eigenfunction. For the Euclidean unit ball $\Omega=\mathbb{B}^n_1$ ($n \geq 2$), the eigenvalues of $\eqref{problem1}$ are $k^{2} (n+2k)$ ($k\geq 0$), which was derived by Xia and Wang \cite{XW18} (cf. Section~5.2 in Liu's \cite{Liu16}).

In addition, for the eigenvalue problem $\eqref{problem1}$, Xia and Wang \cite{XW18} found an isoperimetric upper bound for the first non-zero eigenvalue $\xi_{1}$ on a bounded domain $\Omega$ with a smooth boundary in $\mathbb{R}^{n}$, which states
\begin{align*}
	\xi_{1}(\Omega) \le \frac{(n+2) |\pt \Omega|  }{n |\Omega| (|\Omega| /|\mathbb{B}^n_1| )^{2/n}}
\end{align*}
with the equality if and only if $\Omega$ is a ball. We are interested in other kinds of isoperimetric inequalities. First note that we have the rescaling property $\xi_k((\Omega,c^2g))=c^{-3}\xi_k((\Omega,g))$ ($c>0$). So in order to study the shape optimization problem, we need to do some normalization on the eigenvalues. The most natural one for the Steklov problem~\eqref{problem1} is possibly $\xi_k(\Omega)|\pt \Omega|^{3/(n-1)}$. Nevertheless, we may also study $\xi_k(\Omega)| \Omega|^{3/n}$, or more generally,
\begin{align}\label{n-eigenvalue1}
\xi_k(\Omega)(|\pt \Omega|^{3/(n-1)})^\alpha (| \Omega|^{3/n})^{1-\alpha}
\end{align}
for a parameter $\alpha\in \R$. Then for the purpose of establishing an isoperimetric inequality involving the quantity \eqref{n-eigenvalue1}, it is advisable to examine it first for simple examples.

Except the Euclidean balls, the Euclidean annular domains may be the simplest examples for which we can verify an isoperimetric inequality. Thus we first consider the asymptotic expansion of the spectrum of the problem \eqref{problem1} on the Euclidean annular domain. Below is our result in this aspect.
\begin{thm}\label{thm1}
Let $n\geq 2$ and $k\geq 1$. For the problem \eqref{problem1} on the Euclidean annular domain $\mathbb {B}^{n}_{1} \setminus \overline{\mathbb {B}^{n}_{\epsilon}}$, its $k$th distinct Steklov eigenvalue $\xi_{(k)}$ has the following asymptotic expansion as $\epsilon \to 0$.
	\begin{itemize}
		\item[{\rm (i)}]  When $n+2k \geq 7$,
		\begin{equation*}
			\xi_{(k)} = k^2(n+2k)
			-k^2 (n+2k)\left[\frac{k(n+2k)}{n+k-2}+2\right]\epsilon^{n+2k-2}
			+O\left(\epsilon^{n+2k-1}\right).
		\end{equation*}
		\item[{\rm (ii)}]  When $n+2k=6$,
	  \begin{align*}
	  	\xi_{(k)}
	  	=&k^{2}(n+2k)
	  	-k^{2}(n+2k)\left[\frac{k(n+2k)}{n+k-2}+\frac{1}{2}(n+2k-3)^{3}\right.\\
	  	&\left.~+\frac{1}{2}(n+2k+1)\right]\epsilon^{n+2k-2} +O\left(\epsilon^{n+2k-1}\right).
	  \end{align*}
	\item[{\rm (iii)}]  When $n+2k=5$, namely, $n=3$, $k=1$,
	\begin{align*}
		\xi_{(1)}=5-\dfrac{1485}{64}\epsilon^{3}+O\left(\epsilon^{4}\right).
	\end{align*}
\item[{\rm (iv)}] When  $n=2,~k=1$,
  \begin{align*}
	\xi_{(1)}
	=4-24\epsilon^{2}+O\left(\epsilon^{2} \log^{-1}\epsilon\right).
  \end{align*}
	\end{itemize}
In particular, the first non-zero eigenvalue as $\epsilon \to 0$ is given by
\begin{align*}
	\xi_{(1)}=
\begin{cases}
4-24\epsilon^{2}+O\left(\epsilon^{2} \log^{-1}\epsilon\right),\quad n=2,\\
5-\dfrac{1485}{64}\epsilon^{3}+O\left(\epsilon^{4}\right),\quad  n=3,\\
6-114\epsilon^{4}+O\left(\epsilon^{5}\right),\quad n=4,\\
n + 2 - \dfrac{3n(n+2)}{n-1}\epsilon^{n} + O\left(\epsilon^{n+1}\right),\quad n\geq5,
\end{cases}
\end{align*}
and $\xi_{(k)}\left(\mathbb {B}^{n}_{1} \setminus \overline{\mathbb {B}^{n}_{\epsilon}}\right)$ satisfies
\begin{align*}
	\xi_{(k)}\left(\mathbb {B}^{n}_{1} \setminus \overline{\mathbb {B}^{n}_{\epsilon}}\right)
	\left|  \partial \left( \mathbb {B}^{n}_{1}  \setminus \overline{\mathbb {B}^{n}_{\epsilon}} \right)\right| ^{\frac{3}{n-1}}
	>
	\xi_{(k)}\left(\mathbb {B}^{n}_{1}\right)
	\left|  \partial \left( \mathbb {B}^{n}_{1}\right)\right| ^{\frac{3}{n-1}}.
\end{align*}
\end{thm}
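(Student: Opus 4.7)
The plan is to reduce the problem to an explicit $4\times 4$ characteristic determinant by separation of variables, expand that determinant perturbatively in $\epsilon$ around each unit-ball eigenvalue $k^{2}(n+2k)$, and then deduce the isoperimetric inequality by comparing powers of $\epsilon$.

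First I would separate variables. Writing $u(r,\theta)=f(r)Y_{k}(\theta)$ with $Y_{k}$ a spherical harmonic of degree $k$ on $\mathbb{S}^{n-1}$, the biharmonic equation $\Delta^{2}u=0$ forces $f$ into the four-dimensional space spanned by the radial basis $\{r^{k},\,r^{k+2},\,r^{-(n+k-2)},\,r^{-(n+k-4)}\}$; in the single case $n=2$, $k=1$ the last of these collides with $r^{k}$ and must be replaced by a $\log r$-type solution, which is already the reason for treating that case separately. The two Steklov conditions at $r=1$ together with the two at $r=\epsilon$ (remembering the reversed outward normal on the inner sphere) yield a $4\times 4$ homogeneous linear system in the four coefficients, and the vanishing of its determinant $D_{k}(\xi,\epsilon)$ provides the characteristic equation; this determinant is quadratic in $\xi$.

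Next I would carry out an eigenvalue perturbation. Using the ansatz $\xi_{(k)}(\epsilon)=k^{2}(n+2k)+\delta(\epsilon)$ with $\delta\to 0$, and allowing the coefficients of the singular basis functions $r^{-(n+k-2)}$, $r^{-(n+k-4)}$ to be small in $\epsilon$, I would solve the two inner-boundary equations for these small coefficients and substitute them back into the two outer-boundary equations; the resulting $2\times 2$ system has a nontrivial solution precisely when $\delta$ satisfies an algebraic equation whose coefficients depend asymptotically on $\epsilon$. The ball eigenrelation cancels the nominally largest contributions (those of order $\epsilon^{n+2k-4}$), leaving the leading correction at order $\epsilon^{n+2k-2}$. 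A direct calculation produces the clean leading coefficient $k^{2}(n+2k)\bigl[\tfrac{k(n+2k)}{n+k-2}+2\bigr]$ in case~(i), where $n+2k\geq 7$ is large enough that no further interaction contributes at the same order.

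The three exceptional cases~(ii)--(iv) come from scale collisions at small $n+2k$. When $n+2k=6$, secondary contributions involving $\epsilon^{n+2k-4}=\epsilon^{2}$ multiplied by other $\epsilon^{2}$-corrections reach the leading order $\epsilon^{n+2k-2}=\epsilon^{4}$, producing the extra $\tfrac12(n+2k-3)^{3}+\tfrac12(n+2k+1)$ in the bracket. For $n+2k=5$ (forcing $n=3$, $k=1$) the crossing pattern is different again, and explicit computation yields the coefficient $1485/64$. Finally, for $n=2$, $k=1$ the logarithmic basis function enters and the small-coefficient analysis carries a $1/\log\epsilon$ factor, producing the $O(\epsilon^{2}\log^{-1}\epsilon)$ remainder. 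I expect this case-by-case bookkeeping to be the main obstacle: one must track at each stage which of the terms $\epsilon^{k},\epsilon^{k+2},\epsilon^{-(n+k-2)},\epsilon^{-(n+k-4)}$, and their products, enter the determinant at the same power of $\epsilon$, and which cancel by the ball eigenrelation.

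For the isoperimetric inequality, write $|\partial(\mathbb{B}^{n}_{1}\setminus\overline{\mathbb{B}^{n}_{\epsilon}})|=|\mathbb{S}^{n-1}|(1+\epsilon^{n-1})$ and expand $(1+\epsilon^{n-1})^{3/(n-1)}=1+\tfrac{3}{n-1}\epsilon^{n-1}+O(\epsilon^{2(n-1)})$. Combined with the asymptotic $\xi_{(k)}=k^{2}(n+2k)-C_{n,k}\epsilon^{n+2k-2}+\cdots$, where $C_{n,k}>0$ in every case, the normalized quantity equals
\[
k^{2}(n+2k)\,|\mathbb{S}^{n-1}|^{3/(n-1)}\Bigl(1+\tfrac{3}{n-1}\epsilon^{n-1}-\tfrac{C_{n,k}}{k^{2}(n+2k)}\epsilon^{n+2k-2}+\cdots\Bigr).
\]
Since $n-1<n+2k-2$ for every $k\geq 1$ and $n\geq 2$, the boundary-correction term dominates the eigenvalue-correction term, and the bracketed factor exceeds $1$ for all sufficiently small $\epsilon$, giving the strict inequality against $\xi_{(k)}(\mathbb{B}^{n}_{1})|\partial\mathbb{B}^{n}_{1}|^{3/(n-1)}$.
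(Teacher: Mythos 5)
Your proposal follows essentially the same route as the paper: separation of variables with the radial basis $\{r^{k},r^{k+2},r^{2-n-k},r^{4-n-k}\}$ (logarithmic modification only at $n=2$, $k=1$), reduction to the vanishing of a $4\times4$ determinant that is quadratic in $\xi$, case-by-case asymptotic expansion governed by the collisions of the powers $\epsilon^{n+2k-2}$ with $\epsilon^{2n+4k-4}$ and with the log terms, and finally the observation that the boundary-area correction of order $\epsilon^{n-1}$ dominates the eigenvalue correction of order $\epsilon^{n+2k-2}$ since $k\geq 1$. The only difference is organizational: you eliminate the small singular-mode coefficients via the inner-boundary equations, whereas the paper expands $(-B\pm\sqrt{B^{2}-4AC})/(2A)$ directly, but these are the same computation.
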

As an immediate consequence, we get the following conclusion on the shape optimization, which is the main purpose of our study. Here we also would like to point out that since there are few results on the shape optimization of fourth-order Steklov eigenvalues, any results on this topic would be precious in some sense.
\begin{thm}\label{main-purpose1}
For the normalized eigenvalue $\xi_m(\Omega)|\pt \Omega|^{3/(n-1)}$ ($m\geq 1$), the Euclidean balls are not the maximizers among Euclidean domains.
\end{thm}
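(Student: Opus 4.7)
The plan is to establish, for each $m\geq 1$, that the Euclidean annular domain $\Omega_\epsilon:=\mathbb{B}^n_1\setminus\overline{\mathbb{B}^n_\epsilon}$ beats the unit ball for all sufficiently small $\epsilon>0$, by directly invoking the inequality
\[
\xi_{(k)}(\Omega_\epsilon)|\partial\Omega_\epsilon|^{3/(n-1)} > \xi_{(k)}(\mathbb{B}^n_1)|\partial\mathbb{B}^n_1|^{3/(n-1)}
\]
furnished by Theorem~\ref{thm1}. By the rescaling $\xi_m((\Omega,c^2 g))=c^{-3}\xi_m((\Omega,g))$ it is enough to compare with $\mathbb{B}^n_1$. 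Given $m\geq 1$, the first step is to identify the integer $k\geq 1$ for which $\xi_m(\mathbb{B}^n_1)=\xi_{(k)}(\mathbb{B}^n_1)=k^2(n+2k)$: the eigenfunctions of \eqref{problem1} on the ball at level $\xi_{(k)}$ are spanned by $(a\,r^k+b\,r^{k+2})Y_k(\theta)$ with $Y_k\in\mathcal{H}_k$ and $a,b$ related by the Neumann condition, so $\xi_{(k)}(\mathbb{B}^n_1)$ has multiplicity $\dim\mathcal{H}_k$, which together with the cumulative counts pins down $k$ from $m$.

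Next I would show that $\xi_m(\Omega_\epsilon)=\xi_{(k)}(\Omega_\epsilon)$ for $\epsilon$ small enough. Separating variables on $\Omega_\epsilon$, the Steklov spectrum decomposes over harmonic degrees. At each degree $j\geq 1$ the $4$-dimensional radial biharmonic space $\langle r^j,r^{j+2},r^{2-n-j},r^{4-n-j}\rangle$ (with appropriate logarithmic replacements when some exponents coincide) together with the four boundary conditions on $\{r=1\}\cup\{r=\epsilon\}$ produces a characteristic polynomial quadratic in $\xi$: one ``ball-like'' root that converges to $j^2(n+2j)$ and one root that diverges as $\epsilon\to 0$. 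The $j=0$ sector likewise yields only $\xi=0$ together with roots that blow up. Therefore, for $\epsilon$ small the eigenvalues of $\Omega_\epsilon$ up through the $k$-th distinct level coincide, with matching multiplicities, with those of $\mathbb{B}^n_1$, whence $\xi_m(\Omega_\epsilon)=\xi_{(k)}(\Omega_\epsilon)$.

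Combining these two steps with Theorem~\ref{thm1} gives
\[
\xi_m(\Omega_\epsilon)|\partial\Omega_\epsilon|^{3/(n-1)}
=\xi_{(k)}(\Omega_\epsilon)|\partial\Omega_\epsilon|^{3/(n-1)}
>\xi_{(k)}(\mathbb{B}^n_1)|\partial\mathbb{B}^n_1|^{3/(n-1)}
=\xi_m(\mathbb{B}^n_1)|\partial\mathbb{B}^n_1|^{3/(n-1)},
\]
proving that Euclidean balls are not maximizers of $\xi_m(\Omega)|\partial\Omega|^{3/(n-1)}$. I expect the main obstacle to be the bookkeeping in the preceding step, namely ruling out the possibility that some non-ball-like root (from a lower harmonic degree) or some non-trivial radial eigenvalue slips below the threshold $\xi_{(k)}(\Omega_\epsilon)$ as $\epsilon\to 0$. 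This is handled by the same asymptotic analysis of the characteristic equations that already underlies Theorem~\ref{thm1}, so it introduces no essentially new difficulty.
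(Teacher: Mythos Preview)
Your proposal is correct and follows the same route as the paper, which treats Theorem~\ref{main-purpose1} as an immediate consequence of Theorem~\ref{thm1} without further argument. You have simply made explicit the multiplicity bookkeeping (matching $\xi_m$ to $\xi_{(k)}$ on both the ball and the annulus, and checking that the diverging roots stay above the relevant threshold for small $\epsilon$) that the paper leaves implicit.
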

By Theorem~\ref{main-purpose1}, it remains an interesting question which domains are the maximizers of the normalized eigenvalue $\xi_m(\Omega)|\pt \Omega|^{3/(n-1)}$ ($m\geq 1$).
\begin{rem}
Similarly, we may draw conclusions for other normalized eigenvalues in \eqref{n-eigenvalue1}.
\end{rem}
\begin{rem}
The analogous result as in Theorem~\ref{thm1} for the second-order Steklov eigenvalue problem was obtained by Fraser and Schoen \cite{FS19} and others (see their paper \cite{FS19} for references). Moreover, they \cite{FS19} were able to carry out a one-dimensional surgery to draw an isoperimetric-type conclusion for contractible Euclidean domains of the dimension $n\geq 3$. See Hong's \cite{Hon21} for the higher dimensional surgery and related results. It is an interesting question whether similar surgeries may apply to fourth-order Steklov eigenvalue problems in the present paper.
\end{rem}
As for the proof of Theorem~\ref{thm1} (and also of Theorem~\ref{thm3} below), we use the method of separating variables. However, the computations involved are lengthy somewhere. Although all the computations may be carried out by hand, we recommend using computer softwares like Mathematica to assist with. We also remark that though straightforward, the computations are not easy.

Next we turn to another important example for Steklov eigenvalue problems, the cylinder over a closed Riemannian manifold. See Example~2.2 in \cite{CGGS24} for the second-order problem. Our next result is the explicit computation of the fourth-order Steklov spectrum for this kind of manifolds by the method of separating variables, and it may be used in the future for the examination of certain isoperimetric inequality or for the study of the spectral properties (cf. Liu's works \cite{Liu11, Liu16}).
\begin{thm}\label{thm2}
Let $C_{L}=[-L,L] \times M$ be a cylinder for $L \in \mathbb{R^{+}}$, where $M$ is a compact Riemannian manifold without boundary. Denote the spectrum of the Laplace--Beltrami operator $\Delta_{M}$ on $M$ by
	\begin{align*}
		0=\lambda_{0}(M)<\lambda_{1} (M) \leq \lambda_{2} (M) \leq \cdots,
	\end{align*}
	and let $\{\beta_k\}^{\infty}_{k=0} \subset L^{2}(M)$ be a corresponding orthonormal basis of eigenfunctions such that
	\begin{equation}\label{DSigma1}
		-\Delta_{M} \beta_k=\lambda_{k}(M) \beta_k.
	\end{equation}
Let $c_k=\lambda_k(M)^{1/2}$. Then the eigenvalues of the cylinder $C_{L}$ for the fourth-order Steklov eigenvalue problem $\eqref{problem1}$ are given by
	\begin{equation*}
		0, ~
		\frac{3}{L^3}, ~
		\frac{2c_{k}^{3} \left(e^{2Lc_{k}}+1\right)^2}{e^{4Lc_{k}}-4Lc_{k}e^{2Lc_{k}}-1}, ~
		\frac{2c_{k}^{3} \left(e^{2Lc_{k}}-1\right)^2}{e^{4Lc_{k}}+4Lc_{k}e^{2Lc_{k}}-1}
	\end{equation*}
	and the corresponding eigenfunctions $u(s,x)$ are
	\begin{align*}
	&~1, ~
	-3L^2 s+s^3, 	\\
	&\left(\frac{c_{k} L-1-P_{k}}{c_{k} e^{2Lc_{k}}+c_{k}} e^{c_{k}s}
	+s e^{c_{k}s}
	+\frac{1-c_{k} L+P_{k}}{c_{k} e^{2Lc_{k}}+c_{k}} e^{-c_{k}s}
	+s e^{-c_{k}s}\right) \beta_k (x),\\
	&\left(\frac{c_{k} L-1+P_{k}}{c_{k} e^{2Lc_{k}}-c_{k}} e^{c_{k}s}
	-s e^{c_{k}s}
	+\frac{c_{k} L-1+P_{k}}{c_{k} e^{2Lc_{k}}-c_{k}} e^{-c_{k}s}
	+s e^{-c_{k}s}\right) \beta_k (x),
   \end{align*}
where $P_{k}:=\left(1+c_{k} L\right)e^{2L c_{k}}$.
\end{thm}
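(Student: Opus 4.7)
The plan is to separate variables on $C_L = [-L,L]\times M$, using completeness of $\{\beta_k\}$ in $L^2(M)$ to write eigenfunctions in the form $u(s,x) = f(s)\beta_k(x)$. Since $\Delta u = (f''(s) - c_k^2 f(s))\beta_k(x)$, the biharmonic equation reduces to the ODE
\[f''''(s) - 2c_k^2 f''(s) + c_k^4 f(s) = 0,\]
with characteristic polynomial $(r^2-c_k^2)^2$ having double roots $\pm c_k$. The outward normals at the two boundary components are $\pm\partial_s$, so the Neumann condition becomes $f'(\pm L)=0$ and the Steklov condition becomes $\pm f'''(\pm L) + \xi f(\pm L) = 0$. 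Because $\Delta$ preserves the decomposition into modes, each $k$ is treated independently, and no eigenfunctions are missed.

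First I would dispatch the degenerate case $k=0$: the ODE becomes $f''''=0$, so $f(s)=a_0+a_1 s+a_2 s^2+a_3 s^3$, and the Neumann conditions immediately force $a_2=0$ and $a_1=-3a_3 L^2$; the two remaining Steklov conditions then yield either the constant solution with $\xi=0$ or $f(s)=s^3-3L^2 s$ with $\xi=3/L^3$, matching the first two entries of the theorem. For $k\geq 1$ I exploit the $s\mapsto -s$ symmetry to split the general solution $(A+Bs)e^{c_k s}+(C+Ds)e^{-c_k s}$ into an even part $\alpha\cosh(c_k s)+\delta\, s\sinh(c_k s)$ and an odd part $\beta\sinh(c_k s)+\gamma\, s\cosh(c_k s)$. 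In each case the conditions at $s=-L$ follow from those at $s=L$ by symmetry; the Neumann equation eliminates one coefficient in favour of the other, and substituting into the Steklov equation---simplifying via $\cosh^2-\sinh^2=1$---gives a closed-form expression for $\xi$: explicitly,
\[\xi = \frac{2c_k^3 \sinh^2(c_k L)}{c_k L + \sinh(c_k L)\cosh(c_k L)} \quad\text{(even)},\qquad \xi = \frac{2c_k^3 \cosh^2(c_k L)}{\sinh(c_k L)\cosh(c_k L) - c_k L} \quad\text{(odd)}.\]

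The main obstacle is the bookkeeping required to recast these expressions in the exponential form stated in the theorem. Using $2e^{c_k L}\sinh(c_k L)=e^{2c_k L}-1$, $2e^{c_k L}\cosh(c_k L)=e^{2c_k L}+1$, and $4e^{2c_k L}\sinh(c_k L)\cosh(c_k L)=e^{4c_k L}-1$, the two eigenvalue formulas above transform into the fourth and third entries of the theorem respectively. For the eigenfunctions, the ratios $\alpha/\delta$ and $\beta/\gamma$ fixed by the Neumann condition involve $\sinh(c_k L)+c_k L\cosh(c_k L)$ and $\cosh(c_k L)+c_k L\sinh(c_k L)$; multiplying numerator and denominator by $2e^{c_k L}$ exposes the quantity $P_k=(1+c_k L)e^{2Lc_k}$ of the theorem. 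A final rescaling so that the coefficients of $s e^{\pm c_k s}$ become $\pm 1$ then produces exactly the stated eigenfunctions, completing the proof.
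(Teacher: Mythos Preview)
Your proof is correct and follows the same overall strategy as the paper: separation of variables on $C_L$, reduction to the ODE $f^{(4)}-2c_k^2 f''+c_k^4 f=0$, and then imposition of the boundary conditions mode by mode. The one noteworthy difference is in the execution for $k\ge 1$: the paper works directly in the basis $e^{c_k s},\, se^{c_k s},\, e^{-c_k s},\, se^{-c_k s}$, writes the four boundary conditions as a $4\times 4$ homogeneous linear system, and obtains the eigenvalues by setting the determinant to zero and solving the resulting quadratic in $\xi$. Your use of the $s\mapsto -s$ symmetry to split into even and odd subspaces reduces this to two decoupled $2\times 2$ problems, each yielding $\xi$ directly in hyperbolic form; this is cleaner algebra and makes the parity of each eigenfunction transparent, at the modest cost of a final conversion to the exponential expressions stated in the theorem. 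Either route leads to the same answer.
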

\begin{rem}\label{remark1}
Note that $\Sigma:=\pt C_L$ comprises two copies of $M$. So we see $\lambda_{2k}(\Sigma)=\lambda_{2k+1}(\Sigma)=\lambda_k(M)$, $k=0,1,2,\dots$. In addition, we have the following observations.
\begin{itemize}
\item[{\rm (i)}]  Consider the asymptotics of the Steklov eigenvalues $\xi_m$ as $m \rightarrow \infty$. The eigenvalues satisfy
		\begin{align*}
			\xi_{m}(C_{L})=2\lambda_{m}(\Sigma)^{3/2}+O\left(m^{-\infty}\right),\text{ as } m\to \infty,
		\end{align*}
where $O\left(m^{-\infty}\right)$ as $m\to \infty$ means that $\lim_{m\to \infty}O(m^{-\infty})/m^{-l}$ is bounded for any $l\in \N$. This may be verified by use of the Weyl law $\lambda_k(M^{n-1})\sim c k^{2/(n-1)}$ as $k\to \infty$.
		\item[{\rm (ii)}]  Consider the limiting behaviour as $L \rightarrow 0$. The eigenvalues satisfy
		\begin{equation*}
			\begin{gathered}
				\frac{3}{L^3} \rightarrow \infty,~ \frac{2c_{k}^{3} \left(e^{2Lc_{k}}+1\right)^2}{e^{4Lc_{k}}-4Lc_{k}e^{2Lc_{k}}-1} \rightarrow \infty,~
				\frac{2c_{k}^{3} \left(e^{2Lc_{k}}-1\right)^2}{e^{4Lc_{k}}+4Lc_{k}e^{2Lc_{k}}-1} \rightarrow 0.
			\end{gathered}
		\end{equation*}
		\item[{\rm (iii)}]  Consider the limiting behaviour as $L \rightarrow \infty$. The eigenvalues satisfy
		\begin{equation*}
			\begin{gathered}
				\frac{3}{L^3} \rightarrow 0,~
				\frac{2c_{k}^{3} \left(e^{2Lc_{k}}+1\right)^2}{e^{4Lc_{k}}-4Lc_{k}e^{2Lc_{k}}-1} \rightarrow 2c_{k}^{3},~
				\frac{2c_{k}^{3} \left(e^{2Lc_{k}}-1\right)^2}{e^{4Lc_{k}}+4Lc_{k}e^{2Lc_{k}}-1} \rightarrow 2c_{k}^{3}.
			\end{gathered}
		\end{equation*}
So we see $\xi_m(C_L)\to 2\lambda_m(\Sigma)^{3/2}$ as $L\to \infty$.
	\end{itemize}
\end{rem}
By (ii) of Remark~\ref{remark1}, we get the following result.
\begin{thm}\label{thm-existence1}
 There exist Riemannian manifolds for which the $m$th Steklov eigenvalue $\xi_m$ ($m\geq 1$ fixed) is arbitrarily small while keeping the boundary area fixed.
\end{thm}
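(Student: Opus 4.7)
The plan is to use the cylinder construction from Theorem~\ref{thm2} as the supply of examples. Given any target boundary area $A > 0$, I fix a closed Riemannian manifold $(M, h)$ (of any dimension $n-1 \geq 1$) rescaled so that $|M| = A/2$, and consider the one-parameter family of cylinders $C_L = [-L, L] \times M$ for $L \in \mathbb{R}^{+}$. Since $\partial C_L$ is two copies of $M$, one has $|\partial C_L| = 2|M| = A$ independently of $L$, so the boundary-area constraint is preserved automatically as $L$ varies.

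By Theorem~\ref{thm2}, the Steklov spectrum of $C_L$ for the problem \eqref{problem1} consists of $0$, $3/L^3$, and for each $k \geq 1$ the pair
\[
\xi^{\pm}_k(L) \;=\; \frac{2 c_k^{3} \left(e^{2L c_k} \pm 1\right)^{2}}{e^{4L c_k} \mp 4 L c_k\, e^{2L c_k} - 1}, \qquad c_k = \sqrt{\lambda_k(M)},
\]
each inheriting the multiplicity of $\lambda_k(M)$. Part~(ii) of Remark~\ref{remark1} gives $\xi^{-}_k(L) \to 0$ as $L \to 0$ for every fixed $k \geq 1$, while the remaining values $3/L^{3}$ and $\xi^{+}_k(L)$ diverge to infinity. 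Now fix $m \geq 1$. The $m+1$ Steklov eigenvalues $0, \xi^{-}_1(L), \ldots, \xi^{-}_m(L)$, counted with multiplicity, all tend to $0$ as $L \to 0$, and so the variational characterization of $\xi_m$ given in Section~\ref{sec1} forces
\[
\xi_m(C_L) \;\leq\; \max_{1 \leq k \leq m} \xi^{-}_k(L) \;\longrightarrow\; 0 \qquad \text{as } L \to 0.
\]
Hence, given any $\varepsilon > 0$, choosing $L > 0$ small enough produces the desired example with $\xi_m(C_L) < \varepsilon$ and $|\partial C_L| = A$.

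The argument has no real obstacle beyond the limit recorded in Remark~\ref{remark1}(ii); the only piece of bookkeeping is to verify that $0, \xi^{-}_1(L), \ldots, \xi^{-}_m(L)$ really do contribute $m+1$ spectral entries counted with multiplicity. This follows directly from the multiplicity assignment in Theorem~\ref{thm2}, since each $\lambda_k(M)$ donates its Laplace multiplicity to both $\xi^{\pm}_k(L)$, and it can be made entirely transparent by selecting, for instance, an $M$ whose first $m$ nonzero Laplace eigenvalues are simple.
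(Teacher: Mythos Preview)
Your proof is correct and follows exactly the approach the paper intends: use the cylinders $C_L$ from Theorem~\ref{thm2}, hold $|\partial C_L|=2|M|$ fixed, and invoke Remark~\ref{remark1}(ii) to send the branch $\xi^{-}_k(L)\to 0$ as $L\to 0$. The paper states the result as an immediate consequence of Remark~\ref{remark1}(ii) without further detail; your multiplicity bookkeeping simply makes explicit what the paper leaves implicit.
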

Therefore, in order to study lower bounds of $\xi_m(\Omega)|\pt \Omega|^{3/(n-1)}$ ($m\geq 1$) for a general Riemannian manifold, we have to impose certain geometric conditions.

In the second part of the paper, we consider the fourth-order Steklov eigenvalue problem of the type two:
\begin{equation}\label{problem2}
	\begin{cases}
		\Delta ^{2} u =0,&\text{ in }  \Omega, \\
		u =0, \ \Delta u =\eta \dfrac{\partial u }{\partial \nu },&\text{ on }  \partial \Omega,
	\end{cases}
\end{equation}
where $\nu$ denotes the outward unit normal to $\partial \Omega$. The eigenvalues of the problem $\eqref{problem2}$ comprise a discrete and increasing sequence counted with the multiplicity
\begin{align*}
 	0<\eta_{0}(\Omega)<\eta_{1}(\Omega) \leq \eta_{2}(\Omega) \leq \cdots \nearrow +\infty.
\end{align*}
And the eigenvalue $\eta_{k}$ admits the variational characterization:
\begin{align*}
	\eta_{k}=\inf_{u \in H^{2}(\Omega), u|_{\pt \Omega}=0, \pt_\nu u|_{\pt \Omega}\neq 0, \atop \int_{\partial \Omega} u u_{i} d a_{g}, i=0,1,\cdots,k-1}
	\dfrac{\int_{\Omega} \left(\Delta u \right)^{2}  d v_{g}}{\int_{\partial \Omega} \left(\partial_{\nu} u \right)^{2} d a_{g}},
\end{align*}
where $u_{i}$ is the $i$th eigenfunction.


The first eigenvalue $\eta_0$ is significant since as observed by Kuttler \cite{K72,K79}, it is the sharp constant for the $L^{2}$-a priori estimate of the Laplace equation with nonhomogeneous Dirichlet boundary conditions. If $\Omega$ is a Euclidean unit ball, then the eigenvalues of the problem \eqref{problem2} are $n+2k$ ($k\geq 0$) \cite{FGW05}. In addition, as noted by Kuttler \cite{K72} as well as Wang and Xia \cite{WX09}, the first eigenvalue satisfies an isoperimetric inequality
\begin{align*}
\eta_0\leq \frac{|\pt \Omega|}{|\Omega|}.
\end{align*}
Moreover, Ferrero, Gazzola, and Weth \cite{FGW05} proved that if the equality in the above inequality holds for a Euclidean domain, then the domain must be a ball; cf. Theorem~1.3 in \cite{WX09}. See \cite{RS15} for other sharp bounds on $\eta_0$, and also \cite{BLSV23} for interesting estimates on $\eta_0$ in the setting of free boundary hypersurfaces in a Euclidean ball.

In this paper we are concerned with the shape optimization for the problem~\eqref{problem2}. For this problem, the rescaling property reads $\eta_k((\Omega,c^2g))=c^{-1}\eta_k((\Omega,g))$. So one natural normalization of the eigenvalues is given as $\eta_k(\Omega)|\pt \Omega|^{1/(n-1)}$. Similarly to the problem~\eqref{problem1} of the type one, for the problem~\eqref{problem2} we obtain two main results as in Theorems~\ref{thm3} and \ref{thm4}.
\begin{thm}\label{thm3}
	Let $n\geq 2$ and $k \geq 0$. For the problem \eqref{problem2} on the Euclidean annular domain $\mathbb {B}^{n}_{1} \setminus \overline{\mathbb {B}^{n}_{\epsilon}}$, its $k$th distinct Steklov eigenvalue $\eta_{(k)}$ (the notation for the case $n=2$ or $n=3$ is slightly different; see Remark~\ref{exceptional-notation} below) has the following asymptotic expansion as $\epsilon \to 0$.
	\begin{itemize}
		\item[{\rm (i)}]  When $n+2k \geq 5$,
		\begin{equation*}
			\eta_{(k)} = n+2k
			-\frac{1}{4} (n+2k)(n+2k-2)^{2}\epsilon^{n+2k-3}
			+O\left(\epsilon^{n+2k-2}\right).
		\end{equation*}
		\item[{\rm (ii)}]  When $n=2,~k=0$,
		\begin{equation*}
			\eta^{(1)}_{(0)}
			= 4\epsilon \log^{2}\epsilon+O\left(\epsilon \log\epsilon\right),\quad
			\eta^{(2)}_{(0)} =4+4\epsilon \log^{2}\epsilon +O\left(\epsilon \log \epsilon  \right).
		\end{equation*}
		\item[{\rm (iii)}]  When $n=3,~k=0$,
		\begin{equation*}
			\eta^{(1)}_{(0)}=\frac{2}{1-\epsilon}, \quad
			\eta^{(2)}_{(0)}=\frac{6}{1-\epsilon}.
		\end{equation*}
		\item[{\rm (iv)}]  When $n=2,~k=1$ or $n=4,~k=0$,
		\begin{equation*}
			\eta_{(k)}=4-4\epsilon +O\left(\epsilon \log^{-1}\epsilon \right).
		\end{equation*}
	\end{itemize}
	
	In particular, the first eigenvalue as $\epsilon \to 0$ is given by
	\begin{align*}
		\eta_{(0)} =
\begin{cases}
4\epsilon \log^{2}\epsilon+O\left(\epsilon \log\epsilon\right),\ n=2,\\
\dfrac{2}{1-\epsilon},\ n=3,\\
4-4\epsilon +O\left(\epsilon \log^{-1}\epsilon \right),\ n=4,\\
n - \dfrac{n(n-2)^2}{4}\epsilon^{n-3} + O\left(\epsilon^{n-2}\right),\ n\geq 5.
\end{cases}
	\end{align*}
Moreover, when $n\geq 4$, we have
\begin{align*}
\begin{cases}
\eta_{(k)}\left(\mathbb {B}^{n}_{1} \setminus \overline{\mathbb {B}^{n}_{\epsilon}}\right) \left|  \partial \left( \mathbb {B}^{n}_{1}  \setminus \overline{\mathbb {B}^{n}_{\epsilon}} \right)\right| ^{\frac{1}{n-1}}  <
		\eta_{(k)}\left(\mathbb {B}^{n}_{1}\right) \left|  \partial \left( \mathbb {B}^{n}_{1}\right)\right| ^{\frac{1}{n-1}},\ k= 0, 1,\\
\eta_{(k)}\left(\mathbb {B}^{n}_{1} \setminus \overline{\mathbb {B}^{n}_{\epsilon}}\right) \left|  \partial \left( \mathbb {B}^{n}_{1}  \setminus \overline{\mathbb {B}^{n}_{\epsilon}} \right)\right| ^{\frac{1}{n-1}}  >
		\eta_{(k)}\left(\mathbb {B}^{n}_{1}\right) \left|  \partial \left( \mathbb {B}^{n}_{1}\right)\right| ^{\frac{1}{n-1}},\ k\geq 2.
\end{cases}
\end{align*}
\end{thm}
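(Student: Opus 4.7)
The plan is to mirror the strategy used for Theorem~\ref{thm1}: separate variables in spherical coordinates on the annulus. Writing $u(r,\theta)=f(r)Y_k(\theta)$, where $Y_k$ is a spherical harmonic of degree $k$ on $\mathbb{S}^{n-1}$ satisfying $-\Delta_{\mathbb{S}^{n-1}}Y_k = k(n+k-2)Y_k$, the equation $\Delta^2 u = 0$ reduces to a fourth-order Euler-type ODE for $f(r)$. In generic dimensions the four linearly independent solutions are $r^{k}$, $r^{2-n-k}$, $r^{k+2}$, $r^{4-n-k}$, while in the exceptional low dimensions ($n=2$, and $n=3,4$ with specific $k$) some of these exponents coincide and one must introduce logarithmic factors. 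The Dirichlet condition $u=0$ at $r=1$ and $r=\epsilon$ gives two linear constraints on the four coefficients, and the Steklov condition $\Delta u = \eta\, \partial_\nu u$ on both boundary components gives two more. Requiring nontrivial solutions yields a $4\times 4$ determinantal equation $D_k(\eta,\epsilon)=0$ whose roots are the eigenvalues in the $k$th spherical-harmonic sector.

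Next I would extract the asymptotics. For generic $n+2k \geq 5$, plug $\eta = (n+2k) + c\,\epsilon^{n+2k-3} + O(\epsilon^{n+2k-2})$ into $D_k(\eta,\epsilon)=0$, noting that at $\epsilon=0$ the determinant vanishes precisely when $\eta$ equals the ball eigenvalue $n+2k$ (consistent with the fact that $r^{k+2}-r^k$ is the corresponding ball eigenfunction). Expanding $D_k$ to leading order in $\epsilon$ and matching powers isolates $c = -\tfrac{1}{4}(n+2k)(n+2k-2)^2$. The exceptional cases in items~(ii)--(iv) arise when the exponents of the radial ODE collide or when the leading-order term in the determinant picks up a $\log\epsilon$ contribution; here I would redo the solution basis with the appropriate logarithmic terms and repeat the determinant expansion, which in the $n=3$, $k=0$ case collapses to the explicit closed form $\eta_{(0)}^{(i)}=2i/(1-\epsilon)$ because only radial solutions contribute and the ODE degenerates. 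Throughout, I recommend verifying the algebra symbolically (as suggested by the authors for Theorems~\ref{thm1} and \ref{thm3}).

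The final comparison of $\eta_{(k)}(\Omega_\epsilon)|\partial\Omega_\epsilon|^{1/(n-1)}$ with its value on the ball is then a direct calculation using $|\partial(\mathbb{B}^n_1\setminus\overline{\mathbb{B}^n_\epsilon})| = |\mathbb{S}^{n-1}|(1+\epsilon^{n-1})$, so that
\begin{equation*}
\frac{\eta_{(k)}(\Omega_\epsilon)|\partial\Omega_\epsilon|^{1/(n-1)}}{\eta_{(k)}(\mathbb{B}^n_1)|\partial\mathbb{B}^n_1|^{1/(n-1)}} = \Bigl(1+\tfrac{\epsilon^{n-1}}{n-1}+O(\epsilon^{2(n-1)})\Bigr)\Bigl(1+\tfrac{c}{n+2k}\epsilon^{n+2k-3}+\cdots\Bigr).
\end{equation*}
Comparing the exponents $n-1$ and $n+2k-3$: for $k=0$ (resp.~$k=1$) the eigenvalue decrement $\epsilon^{n-3}$ (resp.~matching order $\epsilon^{n-1}$ with a negative coefficient of sufficient magnitude) dominates, making the ratio less than one; for $k\geq 2$ the exponent $n+2k-3 \geq n+1$ is strictly larger than $n-1$, so the boundary-area growth wins and the ratio exceeds one.

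The main obstacle, as in Theorem~\ref{thm1}, is the bookkeeping in the determinant expansion: the coefficients need to be tracked to high enough order to isolate the correct leading term, especially in dimensions where the exponent-collision forces logarithmic terms into the ODE basis and shifts the order of the error, as happens for $n=2$, $k=0,1$, and $n=4$, $k=0$. Extra care is needed in item~(ii), where the determinant produces a leading term of order $\epsilon\log^2\epsilon$ rather than a pure power, splitting the double eigenvalue $n+2k=4$ of the ball into two distinct branches $\eta^{(1)}_{(0)}$ and $\eta^{(2)}_{(0)}$ whose expansions must be handled separately.
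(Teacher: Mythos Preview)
Your proposal is correct and follows essentially the same route as the paper: separation of variables using the radial basis of Proposition~\ref{prop1}, a $4\times 4$ determinantal condition from the boundary data, and an asymptotic expansion of the resulting equation in $\epsilon$, with the special cases $n+2k\in\{2,3,4\}$ handled separately. The only minor tactical difference is that the paper writes the determinant explicitly as a quadratic $A\eta^{2}+B\eta+C=0$ and expands the quadratic formula (including $\sqrt{B^{2}-4AC}$ and $1/A$) term by term, rather than substituting an ansatz for $\eta$; both methods yield the same leading coefficients, and your description of the final boundary-area comparison matches the paper's.
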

As before, we get the following conclusion on the shape optimization.
\begin{thm}\label{main-purpose2}
Let $n\geq 4$. For the normalized eigenvalue $\eta_m(\Omega)|\pt \Omega|^{1/(n-1)}$ ($0\leq m\leq n$), the Euclidean balls are not the minimizers among Euclidean domains; for the normalized eigenvalue $\eta_m(\Omega)|\pt \Omega|^{1/(n-1)}$ ($m\geq n+1$), the Euclidean balls are not the maximizers among Euclidean domains.
\end{thm}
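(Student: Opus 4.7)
The plan is to deduce Theorem~\ref{main-purpose2} from the comparison of distinct Steklov eigenvalues between the annulus and the ball, which is the final statement of Theorem~\ref{thm3}. The essential task is a reindexing from distinct eigenvalues to eigenvalues counted with multiplicity, using the explicit multiplicities on the ball.

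On $\mathbb{B}^n_1$, the $k$-th distinct eigenvalue $n+2k$ has multiplicity $d_k:=\dim\{\text{spherical harmonics of degree }k\text{ on }\mathbb{S}^{n-1}\}$; in particular $d_0=1$ and $d_1=n$. Consequently
\[
\eta_0(\mathbb{B}^n_1)=n,\quad \eta_1(\mathbb{B}^n_1)=\cdots=\eta_n(\mathbb{B}^n_1)=n+2,\quad \eta_m(\mathbb{B}^n_1)\geq n+4 \text{ for } m\geq n+1.
\]
Hence the two ranges $0\leq m\leq n$ and $m\geq n+1$ in the theorem are precisely the ranges for which $\eta_m(\mathbb{B}^n_1)=\eta_{(k)}(\mathbb{B}^n_1)$ with $k\in\{0,1\}$ and $k\geq 2$, respectively.

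I take the annulus $\Omega_\epsilon:=\mathbb{B}^n_1\setminus\overline{\mathbb{B}^n_\epsilon}$ with $\epsilon>0$ small as the witness domain. Separating variables by spherical-harmonic degree, in each degree-$k$ sector the fourth-order radial ODE together with the four boundary conditions yields a characteristic determinant that is quadratic in $\eta$, so there are exactly two radial eigenvalues per sector, each of multiplicity $d_k$. Theorem~\ref{thm3} tracks the branch tending to $n+2k$ as $\epsilon\to 0$ and labels it $\eta_{(k)}(\Omega_\epsilon)$. The second, parasitic branch in each sector limits either to infinity or to a value distinct from $n+2k$, and for $\epsilon$ sufficiently small it does not slide past any of the first few main branches. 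The main obstacle is precisely this ordering check: one must verify, either from the characteristic equation or from the asymptotic analysis that produces Theorem~\ref{thm3}, that the parasitic branches stay outside the window $[\eta_{(0)}(\Omega_\epsilon),\eta_{(k)}(\Omega_\epsilon)]$ for each relevant $k$. The restriction $n\geq 4$ in Theorem~\ref{main-purpose2} suggests that this ordering can fail in low dimensions, as already hinted by the exceptional asymptotics in parts (ii)--(iv) of Theorem~\ref{thm3}. Once the ordering is confirmed, the cumulative multiplicities of $\eta_{(0)}(\Omega_\epsilon),\ldots,\eta_{(k)}(\Omega_\epsilon)$ on the annulus agree with those on the ball, producing the reindexing $\eta_m(\Omega_\epsilon)=\eta_{(k)}(\Omega_\epsilon)$ whenever $\eta_m(\mathbb{B}^n_1)=\eta_{(k)}(\mathbb{B}^n_1)$.

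The conclusion is then immediate from the final inequality in Theorem~\ref{thm3}. For $k\in\{0,1\}$, i.e.\ $0\leq m\leq n$, one obtains $\eta_m(\Omega_\epsilon)|\partial\Omega_\epsilon|^{1/(n-1)}<\eta_m(\mathbb{B}^n_1)|\partial\mathbb{B}^n_1|^{1/(n-1)}$, so the ball cannot be a minimizer; for $k\geq 2$, i.e.\ $m\geq n+1$, the inequality reverses, so the ball cannot be a maximizer. Since $\Omega_\epsilon$ is a smooth Euclidean domain, the statement of Theorem~\ref{main-purpose2} follows.
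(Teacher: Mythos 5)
Your proposal is correct and matches the paper's approach: the paper presents Theorem~\ref{main-purpose2} as an immediate consequence of the final inequalities in Theorem~\ref{thm3}, using the annulus $\mathbb{B}^n_1\setminus\overline{\mathbb{B}^n_\epsilon}$ as the witness and the multiplicities $\mu_k$ of spherical harmonics to translate between distinct eigenvalues and eigenvalues counted with multiplicity. The ordering check you flag is indeed the only point needing care, and it goes through because for $n\geq 4$ every second root $\eta^{(2)}_{(k)}$ of the characteristic quadratic tends to infinity as $\epsilon\to 0$ (the finite parasitic branches occur only for $n=2,3$, which is exactly why the theorem excludes those dimensions).
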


\begin{rem}\label{exceptional-notation}
For $n=2$, asymptotically the spectrum is given by a sequence $\{0+,4-,4+,6-,8-,10-,\dots\}$; due to the exceptional eigenvalue $0+$ and $4+$, the $\eta_{(k)}$ in (i) is in fact the $(k+1)$st eigenvalue. For $n=3$, asymptotically the spectrum is given by a sequence $\{2+,5-,6+,7-,9-,\dots\}$; so the $\eta_{(k)}$ in (i) is also the $(k+1)$st eigenvalue. Therefore, for $n=2$ and $3$, it seems little interesting to compare the normalized spectra on $\mathbb {B}^{n}_{1} \setminus \overline{\mathbb {B}^{n}_{\epsilon}}$ and $\mathbb {B}^{n}_{1}$.
\end{rem}
\begin{rem}
In Theorems~3 and 4 of the work \cite{BFG09}, Bucur, Ferrero, and Gazzola studied $\lim_{\epsilon \to 0} \eta_0(\mathbb {B}^{n}_{1} \setminus \overline{\mathbb {B}^{n}_{\epsilon}})$ using a different method. Their results are consistent with ours. See \cite{BG11,AG13,Las17} for other interesting results on the shape optimization of $\eta_0(\Omega)$.
\end{rem}

In the next result we determine explicitly all the eigenvalues and the corresponding eigenfunctions of the cylinder for the problem~\eqref{problem2}.

\begin{thm}\label{thm4}
	Let $C_{L}=[-L,L] \times M$ be a cylinder for $L \in \mathbb{R^{+}}$, where $M$ is a compact Riemannian manifold without boundary. Denote the spectrum of the Laplace--Beltrami operator $\Delta_{M}$ on $M$ by
	\begin{align*}
		0=\lambda_{0} (M)<\lambda_{1} (M) \leq \lambda_{2} (M) \leq \cdots,
	\end{align*}
	and let $\{\beta_k\}^{\infty}_{k=0} \subset L^{2}(M)$ be a corresponding orthonormal basis of eigenfunctions such that
	\begin{align*}
		-\Delta_{\Sigma} \beta_k=\lambda_{k}(M) \beta_k.
	\end{align*}
Let $c_k=\lambda_k(M)^{1/2}$. Then the eigenvalues of the cylinder $C_{L}$ for the fourth-order Steklov eigenvalue problem $\eqref{problem2}$ are given by
	\begin{align*}
		\frac{1}{L}, ~
		\frac{3}{L}, ~
		\frac{2c_k \left(e^{2 L c_k}-1\right)^2}{e^{4Lc_k}-4L c_k e^{2L c_k}-1}, ~
		\frac{2c_k \left(e^{2 L c_k}+1\right)^2}{e^{4Lc_k}+4Lc_k e^{2Lc_k}-1}
	\end{align*}
	and the corresponding eigenfunctions $u(s,x)$ are
	\begin{align*}
		&-L^2 +s^2, ~
		-L^2 s +s^3, 	\\
		&\left(-\frac{Le^{2L c_k}+L}{e^{2L c_k}-1} e^{c_k s}
		+s e^{c_k s}
		+\frac{Le^{2L c_k}+L}{e^{2L c_k}-1} e^{-c_k s}
		+s e^{-c_k s}\right) \beta_k (x) ,\\
		&\left(\frac{Le^{2L c_k}-L}{e^{2L c_k}+1} e^{c_k s}
		-s e^{c_k s}
		+\frac{Le^{2L c_k}-L}{e^{2L c_k}+1} e^{-c_k s}
		+s e^{-c_k s}\right) \beta_k (x).
	\end{align*}
\end{thm}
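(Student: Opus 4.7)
The plan is to use separation of variables adapted to the product structure $C_L = [-L, L] \times M$, exactly as in Theorem~\ref{thm2}. I would expand any admissible function $u$ in the orthonormal eigenbasis on $M$, writing $u(s, x) = \sum_k f_k(s)\beta_k(x)$, and use $\Delta = \partial_s^2 + \Delta_M$ to separate $\Delta^2 u = 0$ into the decoupled ODEs $(D^2 - c_k^2)^2 f_k = 0$ on $[-L, L]$. The boundary conditions on $\partial C_L = \{-L, L\} \times M$ separate as well: $u = 0$ becomes $f_k(\pm L) = 0$, while $\Delta u = \eta\,\partial_\nu u$ becomes $f_k''(L) = \eta f_k'(L)$ and $f_k''(-L) = -\eta f_k'(-L)$ (the $c_k^2 f_k$ contribution to $\Delta u$ drops out on the boundary because $f_k(\pm L) = 0$, and the outward normals are $\pm\partial_s$). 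Completeness of $\{\beta_k\}$ in $L^2(M)$ then guarantees that the full Steklov spectrum of $C_L$ is the disjoint union, over $k$, of the eigenvalues of these one-variable problems.

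For the zero mode $c_0 = 0$ the ODE is $f'''' = 0$, and the two-dimensional subspace cut out by $f(\pm L) = 0$ is spanned by $s^2 - L^2$ and $s^3 - L^2 s$. A direct evaluation of $f''(L)/f'(L)$ on each basis element yields the two eigenvalues $\eta = 1/L$ and $\eta = 3/L$ with the polynomial eigenfunctions in the statement. For $c_k > 0$ the general solution of $(D^2 - c_k^2)^2 f = 0$ is four-dimensional, spanned by $e^{\pm c_k s}$ and $se^{\pm c_k s}$. Rather than attack a $4 \times 4$ determinant head-on, I would exploit the reflection symmetry $s \mapsto -s$ of the cylinder to decompose $f$ into its even and odd parts
\begin{align*}
f_e(s) &= \alpha\,(e^{c_k s} + e^{-c_k s}) + \beta\, s\,(e^{c_k s} - e^{-c_k s}),\\
f_o(s) &= \gamma\,(e^{c_k s} - e^{-c_k s}) + \delta\, s\,(e^{c_k s} + e^{-c_k s}).
\end{align*}
Each piece carries only two parameters; the condition $f(L) = 0$ expresses one in terms of the other, and the Steklov condition $f''(L) = \eta f'(L)$ at $s = L$ then produces an explicit formula for $\eta$, with the twin condition at $s = -L$ automatic by parity.

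The decisive algebraic step is the identity $(D^2 - c_k^2)(s e^{\pm c_k s}) = \pm 2 c_k e^{\pm c_k s}$, which, combined with $f(L) = 0$, lets me rewrite $f''(L)$ as $2c_k$ times a simple linear functional of the coefficients, eliminating the $e^{\pm c_k s}$ contributions before the eigenvalue equation is imposed. After substituting the relation forced by $f(L) = 0$ into $f'(L)$ and clearing denominators by multiplying through by $e^{2Lc_k}$ (using $e^{c_k L} e^{-c_k L} = 1$ at the key step), the two branches collapse to
\begin{align*}
\eta_{\text{even}} = \frac{2 c_k\,(e^{2Lc_k} + 1)^2}{e^{4Lc_k} + 4Lc_k\,e^{2Lc_k} - 1}, \qquad
\eta_{\text{odd}} = \frac{2 c_k\,(e^{2Lc_k} - 1)^2}{e^{4Lc_k} - 4Lc_k\,e^{2Lc_k} - 1},
\end{align*}
and back-substitution recovers the two closed-form eigenfunctions given in the theorem.

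The main obstacle I anticipate is algebraic rather than conceptual: even after the parity decomposition, the raw expressions for $f'(L)$ mix four exponentials with polynomial prefactors, and pulling them into the clean symmetric form above demands invoking $pq = 1$ (for $p = e^{c_k L}$, $q = e^{-c_k L}$) at exactly the right moment so that the cross-term $4Lc_k\, pq = 4Lc_k$ appears. As the authors themselves recommend in connection with Theorem~\ref{thm2}, I would verify these manipulations with a computer algebra system.
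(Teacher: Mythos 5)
Your proposal is correct, and the underlying strategy — separation of variables on the product metric, reduction of $\Delta^2 u = 0$ to the ODE $(D^2-c_k^2)^2 f_k=0$, and imposition of the separated boundary conditions $f_k(\pm L)=0$, $f_k''(\pm L)=\pm\eta f_k'(\pm L)$ — is exactly the paper's. Where you genuinely diverge is in how the resulting linear algebra is organized. The paper writes down the full $4\times 4$ system in the coefficients $d_1,\dots,d_4$, expands the determinant to obtain a quadratic in $\eta$ (e.g.\ $\left[(e^{4c_kL}-1)^2-16c_k^2L^2e^{4c_kL}\right]\eta^2+\cdots=0$ for $k\geq 1$), and solves it. You instead exploit the reflection symmetry $s\mapsto -s$ to split the solution space into even and odd two-parameter families, so that $f(L)=0$ eliminates one parameter and $\eta=f''(L)/f'(L)$ is read off directly, with the conditions at $s=-L$ automatic by parity; the identity $(D^2-c_k^2)(se^{\pm c_ks})=\pm 2c_ke^{\pm c_ks}$ together with $f(L)=0$ then gives $f''(L)$ in closed form, and $pq=1$ produces the cross-term $4Lc_k$ in the denominators. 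I checked both branches: your even family yields $\eta=2c_k(p+q)^2/(p^2-q^2+4Lc_k)$ and the odd family $\eta=2c_k(p-q)^2/(p^2-q^2-4Lc_k)$, which upon multiplying by $p^2=e^{2Lc_k}$ give precisely the third and fourth eigenvalues of the theorem, with the stated eigenfunctions (your $\gamma=-L(p+q)/(p-q)$ equals $-L(e^{2Lc_k}+1)/(e^{2Lc_k}-1)$). What your route buys is the avoidance of the quadratic formula and of the $4\times4$ determinant expansion entirely — each eigenvalue appears as a simple ratio — at the mild cost of having to justify that every eigenfunction decomposes into pure-parity eigenfunctions (standard, since the problem commutes with $s\mapsto -s$); the same parity argument is also what guarantees, in your $k=0$ case, that no genuine mixture of $s^2-L^2$ and $s^3-L^2s$ can be an eigenfunction, so that evaluating on the two basis elements really does exhaust the spectrum of the zero mode.
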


\begin{rem}\label{remark2}
	We have the following observations.
	\begin{itemize}
		\item[{\rm (i)}]  Consider the asymptotics of the Steklov eigenvalues $\eta_m$ as $m \rightarrow \infty$. The eigenvalues satisfy
		\begin{align*}
			\eta_{m}(C_{L})=2\sqrt{\lambda_{m}(\Sigma)}+O\left(m^{-\infty}\right),\text{ as } m\to \infty.
		\end{align*}
		\item[{\rm (ii)}]  Consider the limiting behaviour as $L \rightarrow 0$. The eigenvalues satisfy
		\begin{equation*}
			\begin{gathered}
				\frac{1}{L} \rightarrow \infty, \frac{3}{L}\rightarrow \infty,
				\frac{2c_k \left(e^{2 L c_k}-1\right)^2}{e^{4Lc_k}-4L c_k e^{2L c_k}-1}
				 \rightarrow  \infty,
				\frac{2c_k \left(e^{2 L c_k}+1\right)^2}{e^{4Lc_k}+4Lc_k e^{2Lc_k}-1} \rightarrow \infty .
			\end{gathered}
		\end{equation*}
		\item[{\rm (iii)}]  Consider the limiting behaviour as $L \rightarrow \infty$. The eigenvalues satisfy
		\begin{equation*}
			\begin{gathered}
				\frac{1}{L} \rightarrow 0,
				 \frac{3}{L}\rightarrow 0,
				\frac{2c_k \left(e^{2 L c_k}-1\right)^2}{e^{4Lc_k}-4L c_k e^{2L c_k}-1}\rightarrow 2c_{k},
				\frac{2c_k \left(e^{2 L c_k}+1\right)^2}{e^{4Lc_k}+4Lc_k e^{2Lc_k}-1} \rightarrow 2c_{k}.
			\end{gathered}
		\end{equation*}
So we see $\eta_m(C_L)\to 2\lambda_m(\Sigma)^{1/2}$ as $L\to \infty$.
	\end{itemize}
\end{rem}
As before, by (ii) of Remark~\ref{remark2}, we get the following result.
\begin{thm}\label{thm-existence2}
 There exist Riemannian manifolds for which the first Steklov eigenvalue $\eta_0$ is arbitrarily large while keeping the boundary area fixed.
\end{thm}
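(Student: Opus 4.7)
The plan is to use the family of cylinders $C_{L}=[-L,L]\times M$ from Theorem~\ref{thm4} as the desired examples. First I would fix an arbitrary closed Riemannian manifold $M$ and observe that the boundary $\partial C_{L}$ consists of two isometric copies of $M$ placed at $s=\pm L$, so $|\partial C_{L}|=2|M|$ is independent of $L$. Thus varying $L>0$ while holding $M$ fixed gives a one-parameter family of Riemannian manifolds (with boundary) that all share the same boundary area $2|M|$, which we may prescribe to be any positive number by a suitable initial choice of $M$.

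Next I would invoke part (ii) of Remark~\ref{remark2}, which identifies the $L\to 0$ limits of all four families of eigenvalues appearing in Theorem~\ref{thm4}: the two special eigenvalues $1/L$ and $3/L$ clearly blow up, and a short Taylor expansion of $e^{2Lc_{k}}=1+2Lc_{k}+2(Lc_{k})^{2}+\cdots$ shows that each of the two $c_{k}$-dependent families also diverges (in fact behaving like $3/L$ to leading order, uniformly in $k$ on compact ranges of $c_{k}$). Since Theorem~\ref{thm4} exhausts the spectrum of \eqref{problem2} on $C_{L}$ via separation of variables and the completeness of $\{\beta_{k}\}$ in $L^{2}(M)$, the infimum $\eta_{0}(C_{L})$ equals the minimum over these four families and therefore also tends to $+\infty$ as $L\to 0$.

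Combining the two observations yields the statement: given any target $A>0$, pick any closed $M$ and then choose $L$ small enough so that $\eta_{0}(C_{L})>A$; the boundary area stays at the prescribed value $2|M|$. There is no real obstacle here, since Theorem~\ref{thm-existence2} is a direct corollary of Theorem~\ref{thm4} together with Remark~\ref{remark2}(ii). The only point that deserves a brief remark in the write-up is justifying that the four explicit families indeed constitute the entire spectrum (so that their joint infimum equals $\eta_{0}(C_{L})$); this is immediate from the separation-of-variables derivation of Theorem~\ref{thm4}.
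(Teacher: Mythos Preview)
Your proposal is correct and follows essentially the same approach as the paper: the paper simply states that Theorem~\ref{thm-existence2} follows from part~(ii) of Remark~\ref{remark2}, using the cylinders $C_{L}$ with $L\to 0$ and noting that $|\partial C_{L}|=2|M|$ is fixed. You have supplied more detail (the Taylor expansions and the remark on completeness of the separated eigenfunctions) than the paper itself does, but the underlying argument is identical.
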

Therefore, in order to study upper bounds of $\eta_0(\Omega)|\pt \Omega|^{1/(n-1)}$ for a general Riemannian manifold, we have to impose certain geometric conditions.

In the third part of this paper, we consider another fourth-order Steklov eigenvalue problem. It was proposed by Buoso and Provenzano \cite{BP15} in 2015, and its definition is as follows.

Let $\Omega$ be a bounded domain in $\R^n$ ($n\geq 2$) of the class $C^{0,1}$. We consider the following fourth-order Steklov eigenvalue problem of the type three:
\begin{equation}\label{problem3}
	\begin{cases}
		\Delta^2 u-\tau \Delta u=0, &\text{in }\Omega,\\
		\dfrac{\pt^2 u}{\pt \nu^2}=0, &\text{on }\pt \Omega,\\
		\tau \dfrac{\pt u}{\pt \nu}-\mathrm{div}_{\pt \Omega}(D^2u(\nu,\cdot))-\dfrac{\partial \Delta u}{\pt \nu}=\rho u, &\text{on }\pt \Omega,
	\end{cases}
\end{equation}
where $\tau>0$ is a parameter and $\nu$ denotes the outward unit normal to $\partial \Omega$. This problem was first studied in \cite{BP15} for domains of the class $C^1$. For domains of the class $C^{0,1}$, by the argument with minor modification as in \cite[Theorem~3.10]{LP22}, the problem \eqref{problem3} is also well defined in the weak sense. Namely, there are non-trivial functions $u\in H^2(\Omega)$ and $\rho \in \R$ such that
\begin{equation}\label{problem3.1}
	\int_{\Omega} \langle D^2 u, D^2 \varphi \rangle  +\tau \langle \nabla u,\nabla \varphi\rangle dx=\rho \int_{\pt \Omega} u \varphi d\sigma,\quad \forall \varphi \in H^2(\Omega).
\end{equation}
The spectrum of the problem \eqref{problem3} or its weak form \eqref{problem3.1} consists of a discrete sequence of eigenvalues tending to the infinity:
\begin{equation}
	0=\rho_{0}(\Omega)<\rho_{1}(\Omega) \leq \rho_{2}(\Omega) \leq \cdots \nearrow +\infty.
\end{equation}
The min-max variational principle for the eigenvalue reads
\begin{align}
	\rho_k(\Omega)=\sup_{V_{k}\subset H^2(\Omega),\ \mathrm{dim}(V_{k})=k} \inf_{0\neq u\perp V_{k}\text{ on }\pt \Omega}\frac{\int_\Omega |D^2 u|^2+\tau |\nabla u|^2 dx}{\int_{\pt \Omega}u^2 d\sigma}.
\end{align}
In particular, we get
\begin{align}\label{variational-characterization}
	\rho_1(\Omega)=\inf_{0\neq u\in H^2(\Omega),\ \int_{\pt \Omega}ud\sigma =0}\frac{\int_\Omega |D^2 u|^2+\tau |\nabla u|^2 dx}{\int_{\pt \Omega}u^2 d\sigma}.
\end{align}

If $\Omega$ is a ball of the radius $r$ in $\mathbb{R}^{n}$, the first non-zero eigenvalue is $\rho_{1}(\Omega)=\tau/r$, proved by Buoso and Provenzano \cite{BP15}. They \cite{BP15} also obtained an isoperimetric inequality for the fundamental tone $\rho_{1}(\Omega)$ which states that
\begin{align*}
	\rho_{1} (\Omega)\leq \rho_{1} (\mathbb{B}_{\Omega}),
\end{align*}
with the equality if and only if $\Omega$ is a ball, where $\mathbb{B}_{\Omega}$ is the ball of the same volume as $\Omega$. In \cite{DMWXZ}, Du, Mao, Wang, Xia, and Zhao considered a more general problem than \eqref{problem3}. As a corollary, they proved an isoperimetric bound for the first non-zero eigenvalue of the problem \eqref{problem3} on bounded domains of a Euclidean space, which reads
\begin{align*}
	\rho_{1}(\Omega) \le \frac{n \tau |\Omega|  \int_{\pt \Omega} H^2 d\sigma }{|\pt \Omega|^2}
\end{align*}
with the equality if and only if $\Omega$ is a ball. Here $H$ is the mean curvature of $\pt \Omega$ in $\mathbb{R}^{n}$. See \cite{DMWXZ} and references therein for more related estimates on $\rho_{k}(\Omega)$.  

In this paper, we provide another sharp upper bound for the fundamental tone of the fourth-order Steklov eigenvalue problem $\eqref{problem3}$ as follows.
\begin{thm}\label{thm5}
	Let $\Omega\subset \R^n$ ($n\geq 2$) be a smooth bounded domain with a star-shaped and mean convex boundary $\pt \Omega$. Then
	\begin{align}
\rho_1(\Omega)\leq n\tau |\Omega||\pt \Omega|\left(\frac{n-1}{n}\omega_{n-1}\left(\frac{|\pt \Omega|}{\omega_{n-1}}\right)^{n/(n-1)}+|\Omega|\right)^{-2},
	\end{align}
	with the equality if and only if $\Omega$ is a ball, where $\omega_{n-1}=|\SS^{n-1}|$.
\end{thm}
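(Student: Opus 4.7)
The plan is to follow the Weinstock-Brock strategy, as refined by Kwong and Wei \cite{KW23} for star-shaped and mean convex Euclidean domains. The proof decomposes into two ingredients: a judicious choice of test functions in the variational characterization \eqref{variational-characterization}, and a sharp geometric lower bound on the boundary integral that results.

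For the test functions, I would take the coordinate functions shifted by their boundary averages: $v_i(x) = x_i - c_i$ with $c_i = \frac{1}{|\pt\Omega|}\int_{\pt\Omega}x_i\,d\sigma$ for $i=1,\dots,n$, so that $\int_{\pt\Omega}v_i\,d\sigma = 0$ and each $v_i$ is admissible in \eqref{variational-characterization}. Since $D^2 v_i\equiv 0$ and $|\nabla v_i|^2\equiv 1$, the variational principle produces
\begin{align*}
\rho_1(\Omega)\int_{\pt\Omega}v_i^2\,d\sigma \le \tau|\Omega|,\qquad i=1,\dots,n.
\end{align*}
Using the elementary estimate $\max_i\int_{\pt\Omega}v_i^2\,d\sigma \ge \frac{1}{n}\sum_{i=1}^n\int_{\pt\Omega}v_i^2\,d\sigma = \frac{1}{n}\int_{\pt\Omega}|x-c|^2\,d\sigma$, with $c=(c_1,\dots,c_n)$, yields the preliminary bound
\begin{align*}
\rho_1(\Omega) \le \frac{n\tau|\Omega|}{\int_{\pt\Omega}|x-c|^2\,d\sigma}.
\end{align*}

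The remaining (and main) step is to establish the sharp geometric lower bound
\begin{align*}
\int_{\pt\Omega}|x-c|^2\,d\sigma \ge \frac{1}{|\pt\Omega|}\left(\frac{n-1}{n}\omega_{n-1}\left(\frac{|\pt\Omega|}{\omega_{n-1}}\right)^{n/(n-1)}+|\Omega|\right)^{2},
\end{align*}
with equality only for balls. This is where I expect the main difficulty to lie, and it is the step that truly uses the star-shape and mean convexity hypotheses. Since the left-hand side is translation-invariant, I would first translate so that the origin becomes a star point, so that $\langle x,\nu\rangle \ge 0$ on $\pt\Omega$. The Cauchy-Schwarz inequality reduces matters to the bound
\begin{align*}
\int_{\pt\Omega}|x-c|\,d\sigma \ge \frac{n-1}{n}\omega_{n-1}\left(\frac{|\pt\Omega|}{\omega_{n-1}}\right)^{n/(n-1)}+|\Omega|,
\end{align*}
whose right-hand side factors cleanly for a ball and is sharp precisely there. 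The term $|\Omega|$ should come from the divergence identity $\int_{\pt\Omega}\langle x,\nu\rangle\,d\sigma = n|\Omega|$ combined with the pointwise bound $|x-c|\ge \langle x,\nu\rangle$ modulo a correction handled by the choice of $c$, while the isoperimetric-type term in $|\pt\Omega|^{n/(n-1)}$ should be produced by combining the Hsiung-Minkowski identity $\int_{\pt\Omega}H\langle x,\nu\rangle\,d\sigma = |\pt\Omega|$ with the Minkowski-Guan-Li inequality $\int_{\pt\Omega}H\,d\sigma \ge (n-1)\omega_{n-1}^{1/(n-1)}|\pt\Omega|^{(n-2)/(n-1)}$ valid for star-shaped mean convex hypersurfaces.

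Substituting the geometric lower bound into the variational estimate then gives the stated upper bound on $\rho_1(\Omega)$. For the rigidity statement, I would chase equality through the three steps in turn: (i) the pigeonhole step forces $\int_{\pt\Omega}v_i^2\,d\sigma$ to be independent of $i$; (ii) the Cauchy-Schwarz step forces $|x-c|$ to be constant on $\pt\Omega$, so $\pt\Omega$ is a sphere centered at $c$; and (iii) the Minkowski-Guan-Li inequality is sharp only on round spheres. Together these force $\Omega$ to be a ball, completing the proof.
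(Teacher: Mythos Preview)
Your approach is essentially the paper's: centered coordinate functions as test functions, then Cauchy--Schwarz and the Kwong--Wei lower bound on $\int_{\pt\Omega}|x|\,d\sigma$. The paper streamlines by translating so that the boundary centroid is the origin (hence $c=0$), summing the $n$ test-function inequalities directly rather than pigeonholing through $\max_i$, and citing inequality~(1.4) of~\cite{KW23} as a black box---this also sidesteps the mismatch in your sketch between the star-point origin and the centroid $c$, which would otherwise force you to prove the Kwong--Wei bound for $\int|x-c|$ rather than $\int|x|$.
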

\begin{rem}
Using the Young inequality, we may get a better-looking (but weaker) inequality
\begin{align*}
\rho_1(\Omega)\leq \frac{n^{1-2/n}\omega_{n-1}^{2/n}\tau |\Omega|^{1-2/n}}{|\pt \Omega|}.
\end{align*}
\end{rem}
In the above theorem ``star-shaped'' means that the support function of the boundary is positive everywhere. For the proof of Theorem~\ref{thm5}, after translating the origin, we use coordinate functions $x_i$ ($i=1,2,\dots,n$) as test functions in the variational characterization~\eqref{variational-characterization} of $\rho_1(\Omega)$. Then the proof reduces to the estimate on $\int_{\pt \Omega} |x|^2 d\sigma$, and we may employ the result obtained by Kwong and Wei in \cite{KW23} to conclude.


The structure of this paper is as follows. In Section~2, we review the definition and properties of spherical harmonics in terms of the harmonic homogeneous polynomials, and obtain the expression of the radial function part of the separate variable solutions of the biharmonic equation. In Section~3, using the method of separating variables, we establish the asymptotic expansion of the $k$th distinct Steklov eigenvalue for the fourth-order Steklov eigenvalue problem~\eqref{problem1} on the Euclidean annular domain, and determine the eigenvalues and eigenfunctions of the same problem for cylinders. That is, we prove Theorems~\ref{thm1} and \ref{thm2}. In Section~4, we consider the fourth-order Steklov eigenvalue problem~\eqref{problem2}, and get the corresponding results as in Section~3. So we prove Theorems~\ref{thm3} and \ref{thm4}. In Section~5, we deduce a sharp upper bound for the first non-zero eigenvalue of the fourth-order Steklov eigenvalue problem~\eqref{problem3} on a star-shaped and mean convex domain in $\mathbb{R}^{n}$, thus proving Theorem~\ref{thm5}.


\section{Preliminaries}

In this section we provide some properties on the spherical harmonics and determine the radial function part of solutions of separate variables to the biharmonic equation on $\R^n$.

\subsection{Spherical harmonics} Given a spherical harmonic $\beta$ on $\mathbb{S}^{n-1}$ of the degree $k \geq 0$, it can be seen as a restriction on $\mathbb{S}^{n-1}$ of a harmonic homogeneous polynomial $\tilde{\beta}$ on $\mathbb{R}^n$ of the same degree $k$.
Consider the spaces of harmonic homogeneous polynomials on $\mathbb{R}^n$:
\begin{align*}
	\mathcal{D}_{k}:=\{V \in C^{\infty}(\mathbb{R}^n)|\Delta V=0 ~\text{in $\mathbb{R}^n$, $V$ homogeneous polynomial of degree $k$}\}.
\end{align*}
Let $\mu_{k}$ be the dimension of $\mathcal{D}_{k}$. In particular, we know
\begin{align*}
	\mathcal{D}_{0}=&span\{1\}, \quad  \mu_{0}=1, \\
	\mathcal{D}_{1}=&span\{x_{i}, ~i=1, \dots, n\}, \quad  \mu_{1}=n, \\
	\mathcal{D}_{2}=&span\{x_{i} x_{j}, ~x_{1}^{2}-x_{p}^{2},~1 \leq i<j \leq n,~2 \leq p \leq n\}, \quad  \mu_{2}=\frac{n^2+n-2}{2},
\end{align*}
and $\mu_{k}=C^{n-1}_{n+k-1}-C^{n-1}_{n+k-3},~k \geq 2$. Consult \cite{ABR92} for basic facts on $\mathcal{D}_k$ and $\mu_k$.

For a spherical  harmonic $\beta$ on $\mathbb{S}^{n-1}$ of the degree $k \geq 0$,
one of its basic properties is that
$-\Delta_{\mathbb{S}^{n-1}} \beta=\tau_{k} \beta$,
where
$\tau_{k}=k(n+k-2)$.

\subsection{Solutions of separate variables to the biharmonic equation} Next, we will give the expression for the radial function of the separate variable solutions.
\begin{prop}\label{prop1}
	Let
	$u(r,\phi)=\alpha \left ( r \right )\beta \left (\phi  \right )$ ($r\in \R^+$, $\phi\in\SS^{n-1}$)
	be a separate variable solution of the biharmonic equation
	$\Delta^{2}u=0$ on $\R^n$,
	where
	$\beta\left (\phi\right )$
	is a spherical harmonic on
	$\mathbb {S}^{n-1}$
	of the degree $k$,
	that is,
	$\beta $
	satisfies the equation
	$-\Delta_{\mathbb{S}^{n-1}} \beta=\tau_{k} \beta$,
	where
	$\tau_{k}=k(n+k-2)$.
	Then
	$\alpha\left(r\right)$
	is of the form:
	\begin{align*}
		\alpha(r)=
		\begin{cases}
			a+b\log r+cr^{2}+dr^{2}\log r,\quad  n=2,\ k=0,\\
			ar+br^{-1}+cr^{3}+dr\log r,\quad n=2,\ k=1,\\
			a+br^{-2}+cr^{2}+d\log r,\quad n=4,\ k=0,\\
			ar^{k}+br^{2-n-k}+cr^{k+2}+dr^{4-n-k },\quad \text{otherwise},
		\end{cases}		
	\end{align*}
	where $a, b, c, d$ are all constants.
\end{prop}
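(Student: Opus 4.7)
The plan is separation of variables in polar coordinates, which reduces the biharmonic PDE to a fourth-order Cauchy--Euler ODE for the radial factor. Writing $u(r,\phi)=\alpha(r)\beta(\phi)$ and using $\Delta u=u_{rr}+\tfrac{n-1}{r}u_{r}+\tfrac{1}{r^{2}}\Delta_{\SS^{n-1}}u$ together with $-\Delta_{\SS^{n-1}}\beta=\tau_{k}\beta$, I get $\Delta u=\beta(\phi)\cdot L\alpha(r)$, where $L\alpha:=\alpha''+\tfrac{n-1}{r}\alpha'-\tfrac{\tau_{k}}{r^{2}}\alpha$. Since $L\alpha$ is again radial, iterating yields $\Delta^{2}u=\beta\cdot L(L\alpha)$, so the biharmonic equation is equivalent to $L(L\alpha)=0$.

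Next I would solve this as ``$L\alpha\in\ker L$.'' For the homogeneous equation $L\alpha=0$, plugging in $\alpha=r^{s}$ gives the indicial equation $s^{2}+(n-2)s-\tau_{k}=0$ with roots $s=k$ and $s=2-n-k$, which coincide precisely when $n+2k=2$, i.e., $(n,k)=(2,0)$. Away from that case, $\ker L=\mathrm{span}\{r^{k},\,r^{2-n-k}\}$. To find particular solutions to $L\alpha=r^{k}$ and $L\alpha=r^{2-n-k}$, I use the clean identity $Lr^{s}=[s^{2}+(n-2)s-\tau_{k}]\,r^{s-2}$; substituting $s=k+2$ and $s=4-n-k$ produces particular solutions proportional to $r^{k+2}$ and $r^{4-n-k}$ with coefficients $2(n+2k)$ and $-2(n+2k-4)$, respectively. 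These coefficients are nonzero exactly outside the exceptional cases, and checking linear independence then delivers the generic four-dimensional solution space $\mathrm{span}\{r^{k},r^{2-n-k},r^{k+2},r^{4-n-k}\}$ stated in the proposition.

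Exceptional cases occur exactly when two of the four candidate exponents $k,\,2-n-k,\,k+2,\,4-n-k$ collide, forcing a logarithmic solution. A short case check of the six possible equalities shows these collisions arise only for $(n,k)=(2,0)$ (where simultaneously $k=2-n-k=0$ and $k+2=4-n-k=2$) and for $n+2k=4$ (giving $(n,k)=(2,1)$ or $(4,0)$). In each such case I would employ the standard Frobenius/resonance trick, trying the ansatz $r^{s}\log r$ at the coincident exponent and verifying by direct substitution that $L(r^{s}\log r)$ produces the required $r^{s-2}$ term (for instance, $L(r^{2}\log r)=4\log r+4$ when $(n,k)=(2,0)$, and $L(r\log r)=2/r$ when $(n,k)=(2,1)$). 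Collecting four linearly independent solutions in each exceptional case recovers exactly the lists $\{1,\log r,r^{2},r^{2}\log r\}$, $\{r,r^{-1},r^{3},r\log r\}$, and $\{1,r^{-2},r^{2},\log r\}$ in the statement.

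The main obstacle is the bookkeeping at $(n,k)=(2,0)$, where two independent resonances occur simultaneously so that two logarithmic solutions must be produced; handling it by a generic ``differentiate in $s$'' argument obscures what is happening, so I would prefer direct verification by substitution. Once this is done the theorem of ODE existence/uniqueness guarantees that these four independent solutions span the full solution space of the fourth-order ODE $L(L\alpha)=0$, completing the proof.
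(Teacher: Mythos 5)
Your proof is correct, and it lands on the same four exponents $k$, $2-n-k$, $k+2$, $4-n-k$ and the same three logarithmic exceptional cases as the paper, but you organize the ODE analysis differently. The paper expands $\Delta^2 u$ in full, multiplies by $r^4$ to obtain a fourth-order Euler equation, substitutes $r=e^t$, and factors the resulting quartic characteristic polynomial, producing the logarithms from double roots via the standard $te^{xt}$ ansatz. You instead factor the radial operator as $L\circ L$ with $L\alpha=\alpha''+\frac{n-1}{r}\alpha'-\frac{\tau_k}{r^2}\alpha$ and solve two second-order problems in succession using the identity $Lr^{s}=[s^{2}+(n-2)s-\tau_{k}]r^{s-2}$; the logarithms then arise as resonances, either in $\ker L$ itself (only at $(n,k)=(2,0)$, where the indicial roots $k$ and $2-n-k$ coincide) or in the particular-solution step (only at $n+2k=4$, where the coefficient $-2(n+2k-4)$ vanishes). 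The two viewpoints are reconciled by observing that the paper's quartic is precisely $p(x)\,p(x-2)$ where $p$ is your indicial quadratic, so the root sets agree. Your factorization buys a shorter computation (no quartic to expand and factor) and a transparent explanation of exactly where and why the logarithms appear; the paper's route is more brute-force but needs no structural observation about $\Delta^2$. Both arguments correctly close by noting that the solution space of the fourth-order ODE on $r>0$ is four-dimensional, so the four independent solutions exhibited span it. Your verifications $L(r^2\log r)=4\log r+4$ at $(n,k)=(2,0)$ and $L(r\log r)=2/r$ at $(n,k)=(2,1)$ check out.
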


\begin{proof}
	In polar coordinates, the $n$-dimensional
	Laplacian operator $\Delta$
	can be written as
	\begin{align*}
		\Delta=\frac{\partial^2}{\partial r^2}+\frac{n-1}{r} \frac{\partial}{\partial r}+\frac{1}{r^{2}}\Delta_{\SS^{n-1}}.
	\end{align*}
	Then
	\begin{align*}
		\Delta u=\alpha ^{(2)}\beta +\frac{n-1}{r}\alpha ^{(1)}\beta-\frac{k\left( n+k-2\right )}{r^{2}}\alpha\beta,
	\end{align*}
	and
	\begin{align*}
		\Delta ^{2}u=&\alpha^{(4)} \beta+\frac{2(n-1)}{r} \alpha^{(3)} \beta+\frac{(n-1)(n-3)-2 k(n+k-2)}{r^2} \alpha^{(2)} \beta \\
		&-\frac{(n-1)(n-3)+2k(n-3) (n+k-2)}{r^3} \alpha^{(1)} \beta \\
		&-\frac{k(n+k-2)[(8-2 n)-k(n+k-2)]}{r^4} \alpha \beta \\
		=& 0.
	\end{align*}
	Multiplying the above equation by $r^{4}$, we get an Euler equation
	\begin{align*}
		& \alpha^{(4)} r^4+2(n-1) \alpha^{(3)} r^3+[(n-1)(n-3)-2 k(n+k-2)] \alpha^{(2)} r^2 \\
		-& [2k(n-3) (n+k-2)+(n-1)(n-3)] \alpha^{(1)} r \\
		-& k(n+k-2)[(8-2 n)-k(n+k-2)] \alpha
		= 0.
	\end{align*}
	Let $r=e^{t}$. The Euler equation may be rewritten as
	\begin{align*}	
		& D(D-1)(D-2)(D-3) \alpha+2(n-1) D(D-1)(D-2) \alpha
		\\
&+\left[(n-1)(n-3)\right.
		-\left.2 k(n+k-2)\right] D(D-1) \alpha\\
&
		-\left[2k(n-3) (n+k-2)+(n-1)(n-3)\right] D \alpha \\
		&+ k(n+k-2)\left[2(n-4)+k(n+k-2)\right] \alpha
		=0,
	\end{align*}
	where $D$ represents taking the derivative of $\alpha$
	with respect to $t$.
	
	Next, we arrange the above formula to get
	\begin{align*}
		& D^4 \alpha+2(n-4) D^3 \alpha+\left[n^2-10 n+20-2 k(n+k-2)\right] D^2 \alpha\\
&
		-2(n-4)\left[n
		-2+ k(n+k-2)\right]D \alpha\\
&
		+ k(n+k-2)\left[2(n-4)+k(n+k-2)\right] \alpha
		=0.
	\end{align*}
	Therefore, the characteristic equation of this Euler equation is
	\begin{align*}
		 &x^4+2(n-4) x^3+\left[n^2-10 n+20-2 k(n+k-2) \right]x^2\\
&
		-2(n-4)\left[n-2
		+ k(n+k-2)\right]x\\
&
		+ k(n+k-2)[2(n-4)+k(n+k-2)]
		=0.
	\end{align*}
	Solve the above equation to get
	\begin{align*}
		x_1=k, \qquad x_2=2-n-k,\qquad x_3=k+2,\qquad x_4=4-n-k.
	\end{align*}
	
	Obviously $x_{1} \neq x_{3}$, $x_{2} \neq x_{4}$ and $x_{2} \neq x_{3}$. Next consider the case of multiple roots.
	\begin{itemize}
		\item[{\rm (i)}]  If $x_{1}=x_{2}$ or $x_{3}=x_{4}$, i.e., $n=2$, $k=0$,
		the roots of the Euler equation are
		\begin{align*}
			x_1=0,\qquad x_2=0,\qquad x_3=2, \qquad x_4=2.
		\end{align*}
So the Euler equation has two double roots and
	\begin{align*}
		\alpha \left ( t \right )=a+bt+ce^{2t}+dte^{2t}.
	\end{align*}
	As $t=\log r$, we have
	\begin{align*}
		\alpha \left ( r \right )=a+b\log r+cr^{2}+dr^{2}\log r.
	\end{align*}
    \item[{\rm (ii)}]  If $x_{1}=x_{4}$, i.e., $n+2k=4$, there are two cases as follows.
    \begin{itemize}
    \item[{\rm (a)}] When $n=2$, $k=1$, the roots of the Euler equation are
    \begin{align*}
    	x_1=1,\qquad x_2=-1,\qquad x_3=3, \qquad x_4=1.
    \end{align*}
So the Euler equation has one double root and
    \begin{align*}
    	\alpha \left ( t \right )=ae^{t}+be^{-t}+ce^{3t}+dte^{t}.
    \end{align*}
    As $t=\log r$, we have
    \begin{align*}
    	\alpha \left ( r \right )=ar+br^{-1}+cr^{3}+dr\log r.
    \end{align*}
    \end{itemize}
\begin{itemize}
	\item[{\rm (b)}] When $n=4$, $k=0$, the roots of the Euler equation are
	\begin{align*}
		x_1=0,\qquad x_2=-2,\qquad x_3=2, \qquad x_4=0.
	\end{align*}
So the Euler equation has one double root and
	\begin{align*}
		\alpha \left ( t \right )=a+be^{-2t}+ce^{2t}+dt.
	\end{align*}
	As $t=\log r$, we have
	\begin{align*}
		\alpha \left ( r \right )=a+br^{-2}+cr^{2}+d\log r.
	\end{align*}
\end{itemize}
	\end{itemize}

Finally, if the Euler equation has no multiple roots, we may get
\begin{align*}
	\alpha \left ( t \right )=ae^{kt}+be^{\left ( 2-n-k \right )t}+ce^{\left ( k+2 \right )t}+de^{ \left ( 4-n-k \right )t}.
\end{align*}
As $t=\log r$, we have
\begin{align*}
	\alpha \left ( r \right )=ar^{k}+br^{2-n-k}+cr^{k+2}+dr^{4-n-k }.
\end{align*}
\end{proof}

\section{Results for the fourth-order Steklov eigenvalue problem of  the type one}



\subsection{The asymptotic expansion of the spectrum on annular domains}
In this subsection, we mainly study the asymptotic expansion of the $k$th distinct Steklov eigenvalue $\xi_{(k)}$ for the fourth-order Steklov eigenvalue problem ~\eqref{problem1} on an annular domain $\mathbb{B}^{n}_{1} \setminus \overline{\mathbb{B}^{n}_{\epsilon}}$. Namely, we shall prove Theorem~\ref{thm1}. Besides, note that in the proof of Theorem~\ref{thm1} (and Theorem~\ref{thm3}), for the notational simplicity, we will often omit the limit $\epsilon \to 0$.

\begin{proof}[Proof of Theorem~\ref{thm1}]

In our situation we may employ the method of separating variables. So we assume that the eigenfunction corresponding to the $k$th distinct Steklov eigenvalue $\xi_{(k)}$ is of the form $u(r,\phi)=\alpha(r)\beta(\phi)$. We apply the two boundary conditions of the fourth-order Steklov eigenvalue problem ~\eqref{problem1} to obtain a system of linear equations. For the annular domain $\mathbb{B}^{n}_{1} \setminus \overline{\mathbb{B}^{n}_{\epsilon}}$, we choose the outward unit normal vector $\nu=\partial/\partial r$ and $\nu=-\pt/\pt r$ on $\partial \mathbb{B}^{n}_{1}$ and $\partial \mathbb{B}^{n}_{\epsilon}$, respectively. Thus, the boundary conditions of ~\eqref{problem1} can be written as
\begin{equation}
\begin{aligned}
	\frac{\partial u}{\partial r }\bigg|_{r=1}=&\frac{\partial u}{\partial r }\bigg|_{r=\epsilon }=0,\\
	\left (\frac{\partial\Delta u }{\partial r }+\xi u  \right )\bigg|_{r=1}=&\left (-\frac{\partial\Delta u }{\partial r }+\xi u  \right )\bigg|_{r=\epsilon }=0.
\end{aligned}
\label{boundary1}
\end{equation}
Next we shall discuss the radial function in different cases.

To begin with, we note $\xi_{0}=0$. Here for completeness, we first verify $\xi_{0}=0$ in our setting, in order to illustrate the method.

\textbf{A.1. The case~$n=2, ~k=0$.}
By Proposition $\ref{prop1}$, we know that the separate variable solution of the biharmonic equation is given by
\begin{align*}
	u(r,\phi)=\alpha(r)\beta(\phi)
	=\left(a+b\log r+cr^{2}+dr^{2}\log r\right)\beta(\phi).
\end{align*}

Now we plug the above solution into the boundary conditions \eqref{boundary1} to  obtain the following system of linear equations
\begin{align*}
\left\{\begin{array}{l}
\begin{aligned}
	&b +2c+d=0, \\
	&\epsilon^{-1}b+2\epsilon c+\epsilon\left(2 \log\epsilon +1 \right)d=0,\\
	&\xi a+\xi c+4d=0,\\
	&\xi a+\xi \log\epsilon b+\xi \epsilon^{2}c+\left(\xi \epsilon^{2} \log\epsilon -4\epsilon^{-1}\right)d=0.
\end{aligned}
\end{array}\right.
\end{align*}
So the system of linear equations has non-zero solutions if and only if
\begin{align*}
\left(
-\epsilon^{-1}
+4\epsilon \log^{2}\epsilon
+2\epsilon
-\epsilon^{3}
\right) \xi^{2}+8
\left(\epsilon^{-2}
+\epsilon^{-1}
-\epsilon
-1\right) \xi=0.
\end{align*}
Multiplying it by
$\epsilon^{2}$, we get
\begin{align*}
\left(-\epsilon
+4\epsilon^{3} \log^{2}\epsilon
+2\epsilon^{3}
-\epsilon^{5}
\right) \xi^{2}+8
\left(1+\epsilon
-\epsilon^{2}
-\epsilon^{3}
\right) \xi=0.
\end{align*}
Obviously, the roots of the above equation satisfy
\begin{align*}
\xi^{(1)}_{(0)} =0, \quad
\xi^{(2)}_{(0)} =\infty.
\end{align*}

\textbf{B.1. The case $n=4$, $k=0$.}
By Proposition $\ref{prop1}$, in this case the separate variable solution of the biharmonic equation is
\begin{align*}
	u(r,\phi)=\left(a+br^{-2}+cr^{2}+d\log r\right)\beta(\phi).
\end{align*}
By the boundary conditions \eqref{boundary1}, we obtain the following system of linear equations
$$
\left\{\begin{array}{l}
	\begin{aligned}
		&-2b +2c+d=0, \\
		&-2\epsilon^{-3}b+2\epsilon c+\epsilon^{-1}d=0,\\
		&\xi a+\xi b+\xi c-4d=0,\\
		&\xi a+\epsilon^{-2} \xi b+\epsilon^{2} \xi c+\left(\xi \log\epsilon +4\epsilon^{-3}\right)d=0.
	\end{aligned}
\end{array}\right.
$$
So it has non-zero solutions if and only if
\begin{align*}
	4\left(\epsilon^{-3}
	-2\epsilon^{-1}
	+\epsilon
	+\epsilon^{-3} \log\epsilon
	-\epsilon \log\epsilon \right) \xi^{2}+16
	\left(\epsilon^{-6}
	+\epsilon^{-3}
	-\epsilon^{-2}
	-\epsilon\right) \xi=0,
\end{align*}
and  multiplying it by
$\epsilon^{6}$, we get
\begin{align*}
	\left(
	\epsilon^{3} \log\epsilon
	+\epsilon^{3}
	-2\epsilon^{5}
	-\epsilon^{7} \log\epsilon
	+\epsilon^{7}
	\right) \xi^{2}+4
	\left(1
	+\epsilon^{3}
	-\epsilon^{4}
	-\epsilon^{7}\right) \xi=0.
\end{align*}
Obviously, the roots of the above equation satisfy
\begin{align*}
	\xi^{(1)}_{(0)} =0, \quad
	\xi^{(2)}_{(0)} =\infty.
\end{align*}

\textbf{C.1. The case $n=3$, $k=0$ or $n \geq 5$, $k=0$.}
In these cases, we know
\begin{align*}
	u(r,\phi)
	=(a+r^{2-n}b+r^{2}c+r^{4-n}d)\beta(\phi),
\end{align*}
and
\begin{align*}
	\Delta u
	=(2nc+(8-2n)r^{2-n}d)\beta(\phi).
\end{align*}
Similarly, we obtain a system of linear equations with $a,b,c,d$ as the independent variables:
\begin{equation*}
	\left\{\begin{array}{l}
		\begin{aligned}
			&\left( 2-n\right)b +2c+\left(4-n \right)d=0, \\
			&\left(2-n\right)\epsilon ^{1-n}b+2\epsilon c+\left( 4-n\right)\epsilon ^{3-n}d=0,\\
			&\xi a+\xi b+\xi c+\left[\xi + \left( 2-n \right) \left( 8-2n\right)\right]d=0,\\
			&\xi a+\xi \epsilon ^{2-n} b+\epsilon ^{2}\xi c
			+\left[ \epsilon ^{4-n}\xi +\left( n-2 \right) \left( 8-2n\right)\epsilon ^{1-n}\right]d=0.
		\end{aligned}
	\end{array}\right.
\end{equation*}
So
\begin{align*}
&\left[4\epsilon^{3}
-(n-2)^2\epsilon^{n-1}
+2n(n-4)\epsilon^{n+1}
-(n-2)^2\epsilon^{n+3}
+4\epsilon^{2n-1}\right] \xi^{2}\\
&\qquad -4(n-4)(n-2)^2\left(1
+\epsilon^{n-1}
-\epsilon^{n}
- \epsilon^{2n-1}\right) \xi=0.
\end{align*}
Obviously, the roots of the above equation satisfy
\begin{align*}
	\xi^{(1)}_{(0)} =0, \quad
	\xi^{(2)}_{(0)} =\infty.
\end{align*}

Next, we begin to give the proof of Theorem~\ref{thm1}. We shall consider the asymptotic expansion of non-zero eigenvalues $\xi_{(k)}$ for any positive integer $k \geq 1$.

\textbf{A.2. The case $n+2k\geq5$, $k\geq 1$.}
In this case, the solution of $\Delta^{2} u=0$ is
\begin{align*}
u(r,\phi)=\left(ar^{k}+br^{2-n-k}+cr^{k+2}+dr^{4-n-k }\right)\beta(\phi).
\end{align*}
Obviously,
\begin{align}
	&\qquad \qquad \frac{\partial u}{\partial r}=\nonumber \\
&\left[
	k r^{k-1} a
	+(2-n-k)r^{1-n-k} b
	+(k+2)r^{k+1} c+(4-n-k)r^{3-n-k }d\right]\beta(\phi),
	\label{1}\\
	&\qquad \qquad   \Delta u=\left(2(n+2k)r^{k} c+(8-2n-4k)r^{2-n-k} d\right)\beta(\phi),
	\label{2}\\
	&\frac{\partial \Delta u}{\partial r}=\left(2k(n+2k)r^{k-1}c+(8-2n-4k)(2-n-k)r^{1-n-k}d\right)\beta(\phi). \label{3}
\end{align}
By the boundary conditions \eqref{boundary1} and the equations \eqref{1}, \eqref{2} and \eqref{3},  we obtain a system of linear equations with $a,b,c,d$ as the independent variables:
\begin{equation}
\left\{\begin{array}{l}
	\begin{aligned}
		&ka+\left( 2-n-k \right)b +\left( k+2 \right)c+\left( 4-n-k \right)d=0, \\
		&k\epsilon ^{k-1}a+\left( 2-n-k \right)\epsilon ^{1-n-k}b+\left( k+2 \right)\epsilon ^{k+1}c+\left( 4-n-k \right)\epsilon ^{3-n-k}d=0,\\
		&\xi a+\xi b+\left[ \xi +2k\left( n+2k \right)  \right]c+\left[\xi + \left( 2-n-k \right) \left( 8-2n-4k \right)\right]d=0,\\
		&\xi \epsilon ^{k}a+\xi\epsilon ^{2-n-k} b+\left [ \epsilon ^{k+2}\xi -2k\left ( n+2k \right )\epsilon ^{k-1} \right ]c\\
		&+\left[ \epsilon ^{4-n-k}\xi +\left( n+k-2 \right) \left( 8-2n-4k \right)\epsilon ^{1-n-k}\right]d=0.
	\end{aligned}
\end{array}\right.
\label{linear equ}
\end{equation}
The sufficient and necessary condition for a system of homogeneous linear equations to have non-zero solutions is that the determinant of the coefficient matrix is equal to zero. So we get
	\begin{align*}
		&0=\\
		&\begin{vmatrix}
			k & 2-n-k & k+2 & 4-n-k\\
			k\epsilon ^{k-1} & \left( 2-n-k \right)\epsilon^{1-n-k} & \left( k+2 \right)\epsilon^{k+1} & \left( 4-n-k \right)\epsilon^{3-n-k}\\
			 \xi  & \xi  &  \xi +2k\left( n+2k \right)   &  a_{34}(\xi)\\
			\xi \epsilon^{k} & \xi\epsilon^{2-n-k} & \epsilon^{k+2}\xi -2k\left ( n+2k \right )\epsilon^{k-1} & a_{44}(\xi)
		\end{vmatrix}\\
		&=A\xi^2+B\xi+C=:f(\xi),
	\end{align*}
where
\begin{align*}
	a_{34}(\xi)&:=\xi + 2\left( n+k-2 \right) \left( n+2k-4 \right),\\
	a_{44}(\xi):=&\epsilon^{4-n-k}\xi -2\left( n+k-2 \right) \left( n+2k-4 \right)\epsilon^{1-n-k}.
\end{align*}

We can find $A,B,C$ as follows:
\begin{align*}
	C=&f(0)\\
	=&k(n+k-2)\times(\epsilon^{k-1}-\epsilon^{1-n-k})\\
	&~\times\begin{vmatrix}
		2k (n+2k)  &  2(n+k-2)(n+2k-4) \\
		-2k(n+2k) \epsilon ^{k-1} & -2(n+k-2)(n+2k-4) \epsilon ^{1-n-k}
	\end{vmatrix}\\
    =&4k^2 (n+2k)(n+2k-4)(n+k-2)^2\left( \epsilon^{2k-2}
    -2\epsilon^{-n}+\epsilon^{2-2n-2k} \right),
\end{align*}
\begin{align*}
	&B=f'(0)\\
	&=\begin{vmatrix}
			k & 2-n-k & k+2 & 4-n-k\\
			k\epsilon ^{k-1} & \left( 2-n-k \right)\epsilon^{1-n-k} & \left( k+2 \right)\epsilon^{k+1} & \left( 4-n-k \right)\epsilon^{3-n-k}\\
			 1  & 1  &  1 &  1\\
			0 & 0 & -2k\left ( n+2k \right )\epsilon^{k-1} & a_{44}(0)
		\end{vmatrix}\\
    &~+\begin{vmatrix}
			k & 2-n-k & k+2 & 4-n-k\\
			k\epsilon ^{k-1} & \left( 2-n-k \right)\epsilon^{1-n-k} & \left( k+2 \right)\epsilon^{k+1} & \left( 4-n-k \right)\epsilon^{3-n-k}\\
			 0  & 0  &   2k\left( n+2k \right)   &  2\left( n+k-2 \right) \left( n+2k-4 \right)\\
			\epsilon^{k} &  \epsilon^{2-n-k} & \epsilon^{k+2} & \epsilon^{4-n-k}
		\end{vmatrix}\\
	&={4}(n+2k-4)(n+k-2)^2 \epsilon^{2k+1}\\
		&~+4k(n+2k)(n+2k-4)(n+k-2)\epsilon^{3-n} \\
		&~+4(n+2k-2)^2 [ (k+1)n +k^2-2k-4 ] \epsilon^{2-n}\\
		&~-4(n+2k-2)^2 [ (k+1)n +k^2-2k-4 ]\epsilon^{1-n} \\
		&~-4k(n+2k)(n+2k-4)(n+k-2)\epsilon^{-n}
		-4k^2(n+2k)\epsilon^{5-2n-2k} \\
		&~-4(n+2k-4)(n+k-2)^2\epsilon^{2-2n-2k}
		+4k^2 (n+2k) \epsilon^{2k-2},
\end{align*}
and
\begin{align*}
	A&=f''(0)/2\\
	&=\begin{vmatrix}
			k & 2-n-k & k+2 & 4-n-k\\
			k\varepsilon ^{k-1} & \left( 2-n-k \right)\varepsilon^{1-n-k} & \left( k+2 \right)\varepsilon^{k+1} & \left( 4-n-k \right)\varepsilon^{3-n-k}\\
			 1  & 1  &  1 &  1\\
			\varepsilon^{k} &  \varepsilon^{2-n-k} & \varepsilon^{k+2} & \varepsilon^{4-n-k}
		\end{vmatrix}\\
	&=4\epsilon^{2k+1}
	+4\epsilon^{5-2n-2k}
	-(n+2k-2)^2\epsilon^{5-n}\\
	&~+2(n+2k-4)(n+2k)\epsilon^{3-n}
	-(n+2k-2)^2\epsilon^{1-n}.
\end{align*}
	
Multiplying $A,B,C$ by
  $\epsilon^{2n+2k-2}$, and denoting the resulting quantities by the same symbols, we get
  \begin{align*}
  	A=&
  	4\epsilon^{3}
  	-(n+2k-2)^2\epsilon^{n+2k-1}\\
  	&+2(n+2k)(n+2k-4)\epsilon^{n+2k+1}
  	-(n+2k-2)^2\epsilon^{n+2k+3}
  	+4\epsilon^{2n+4k-1},\\
  	B=&
  	-4(n+2k-4)(n+k-2)^2
  	-4k^2(n+2k)\epsilon^{3}\\
  	&-4k(n+2k)(n+2k-4)(n+k-2)\epsilon^{n+2k-2}\\
  	&-4(n+2k-2)^2 [ (k+1)n +k^2-2k-4 ]\epsilon^{n+2k-1}\\
  	&+4(n+2k-2)^2 \left[(k+1)n +k^2-2k-4\right]\epsilon^{n+2k}\\
  	&+4k(n+2k)(n+2k-4)(n+k-2)\epsilon^{n+2k+1}\\
  	&+4k^2(n+2k) \epsilon^{2n+4k-4}
  	+4(n+2k-4)(n+k-2)^2 \epsilon^{2n+4k-1},\\
  	C=&
  	4k^2 (n+2k)(n+2k-4)(n+k-2)^2\left( 1
  	-2\epsilon^{n+2k-2}+\epsilon^{2n+4k-4}\right).
  \end{align*}
So we have
  \begin{align*}
	&B^2-4AC\\
	=& P(n,k,\epsilon)^2+D(n,k)\epsilon^{n+2k-2}
	+E(n,k)\epsilon^{n+2k-1}+F(n,k)\epsilon^{n+2k}\\
	&+G(n,k)\epsilon^{n+2k+1}
	+H(n,k)\epsilon^{2n+4k-4}
	+O\left(\epsilon^{n+2k+2}\right),
\end{align*}
where
\begin{align*}
P(n,k,\epsilon):=&4(n+2k-4)(n+k-2)^2-4k^2(n+2k)\epsilon^{3},\\
D(n,k):=&2\cdot4^2 k(n+2k)(n+2k-4)^2(n+k-2)^3,\\
	E(n,k):=&4^2(n+2k-4)(n+2k-2)^2 (n+k-2)^2\\ &\cdot \left[(k^2+2k+2)n+2(k^3+k^2 -2k-4)\right],\\
	F(n,k):=&-2\cdot4^2 (n+2k-4)(n+2k-2)^2 (n+k-2)^2 \\
	&\cdot \left[(k+1)n+k^2 -2k-4\right],\\
    G(n,k):=&2\cdot4^2 \left[
    k^3 (n+2k)^2 (n+2k-4)  (n+k-2)\right.\\
    &\left.~-k(n+2k)(n+2k-4)^2(n+k-2)^3 \right.\\
    &\left.~+4k^2 (n+2k) (n+2k-4)(n+k-2)^2\right.\\
    &\left.~-k^2 (n+2k)^2 (n+2k-4)^2 (n+k-2)^2 \right], \\
    H(n,k):=&4^2  k^2  (n+2k)^2  (n+2k-4)^2 (n+k-2)^2\\
    -&2\cdot4^2 k^2 (n+2k) (n+2k-4) (n+k-2)^2.
\end{align*}
Then we obtain
\begin{align*}
	&\sqrt{B^2-4AC}=P(n,k,\epsilon)\\
	&\cdot \left\{
	1+\frac{D(n,k)}{P(n,k,\epsilon)^2}\epsilon^{n+2k-2}+\frac{E(n,k)}
	{P(n,k,\epsilon)^2}
	\epsilon^{n+2k-1}+\frac{F(n,k)}{P(n,k,\epsilon)^2}\epsilon^{n+2k}\right.\\
	&\left.~+\frac{G(n,k)}
	{P(n,k,\epsilon)^2}\epsilon^{n+2k+1}+\frac{H(n,k)}
	{P(n,k,\epsilon)^2}\epsilon^{2n+4k-4}
	+O\left(\epsilon^{n+2k+2}\right)
	\right\}^{1/2}\\
&=P(n,k,\epsilon)\cdot \left\{
	1+\frac{D(n,k)}{2P(n,k,\epsilon)^2}\epsilon^{n+2k-2}+\frac{E(n,k)}
	{2P(n,k,\epsilon)^2}
	\epsilon^{n+2k-1}\right.\\
&+\frac{F(n,k)}{2P(n,k,\epsilon)^2}\epsilon^{n+2k}+\frac{G(n,k)}
	{2P(n,k,\epsilon)^2}\epsilon^{n+2k+1}\\
&\left.+\left(\frac{H(n,k)}
	{2P(n,k,\epsilon)^2}-\frac{1}{8}\frac{D(n,k)^2}{P(n,k,\epsilon)^4}\right)\epsilon^{2n+4k-4}
	+O\left(\epsilon^{n+2k+2}\right)
	\right\}.
\end{align*}

By the expansion
\begin{align*}
\frac{1}{1-x}=1+x+x^2+\cdots,
\end{align*}
we deduce
 \begin{align*}
&\frac{1}{P(n,k,\epsilon)}
	=\frac{1}
	{4(n+2k-4)(n+k-2)^2}
	\cdot \left(1-\frac{k^2 (n+2k)}{(n+2k-4)(n+k-2)^2}\epsilon^3\right)^{-1}\\
	&=\frac{1}
	{4(n+2k-4)(n+k-2)^2}
	\cdot \left[
	1
	+\frac{k^2 (n+2k)}{(n+2k-4)(n+k-2)^2}\epsilon^3
	+O\left(\epsilon^{6}\right)
	\right].
\end{align*}

Therefore, we can derive
\begin{align*}
	&\sqrt{B^2-4AC}=P(n,k,\epsilon)+\frac{D(n,k)}{2P(n,k,\epsilon)}\epsilon^{n+2k-2}+\frac{E(n,k)}
	{2P(n,k,\epsilon)}
	\epsilon^{n+2k-1}\\
&+\frac{F(n,k)}{2P(n,k,\epsilon)}\epsilon^{n+2k}+\frac{G(n,k)}
	{2P(n,k,\epsilon)}\epsilon^{n+2k+1}\\
&+\left(\frac{H(n,k)}
	{2P(n,k,\epsilon)}-\frac{1}{8}\frac{D(n,k)^2}{P(n,k,\epsilon)^3}\right)\epsilon^{2n+4k-4}
	+O\left(\epsilon^{n+2k+2}\right)\\
&=P(n,k,\epsilon)+\frac{D(n,k)}{2P(n,k,0)}\epsilon^{n+2k-2}+\frac{E(n,k)}
	{2P(n,k,0)}
	\epsilon^{n+2k-1}+\frac{F(n,k)}{2P(n,k,0)}\epsilon^{n+2k}\\
&+\left(\frac{G(n,k)}
	{2P(n,k,0)}+\frac{D(n,k)}{2}\frac{k^2 (n+2k)}{4(n+2k-4)^2(n+k-2)^4}   \right)\epsilon^{n+2k+1}\\
&+\left(\frac{H(n,k)}
	{2P(n,k,0)}-\frac{1}{8}\frac{D(n,k)^2}{P(n,k,0)^3}\right)\epsilon^{2n+4k-4}
	+O\left(\epsilon^{n+2k+2}\right).
\end{align*}
Simplifying the above expression, we get
\begin{equation*}
\begin{aligned}
&\sqrt{B^2-4AC}\\
=& \ 4(n+2k-4)(n+k-2)^2
-4k^2(n+2k)\epsilon^{3}\\
&+4k(n+2k)(n+2k-4)(n+k-2)\epsilon^{n+2k-2}\\
&+2(n+2k-2)^2 \left[(k^2+2k+2)n+2(k^3+k^2-2k-4)\right]\epsilon^{n+2k-1}\\
&-4(n+2k-2)^2 \left[(k+1)n+k^2-2k-4\right]\epsilon^{n+2k}\\
&-\left[4k(n+2k)(n+2k-4)(n+k-2)+4k^2 (n+2k)^2 (n+2k-4)\right.\\
&\left. -\frac{8k^3(n+2k)^2}{n+k-2}-16 k^2 (n+2k)\right]\epsilon^{n+2k+1}
-4 k^2 (n+2k) \epsilon^{2n+4k-4}\\
&+O\left(\epsilon^{n+2k+2}\right).
\label{sqrtb4ac}
\end{aligned}
\end{equation*}

Besides, we have
\begin{equation*}
\begin{aligned}
	\frac{1}{A}
	=& \frac{1}{4\epsilon^{3}\left[1-Q(n,k,\epsilon)\right]}\\
	=& \frac{1}{4}\epsilon^{-3}\left[1+Q(n,k,\epsilon)+Q^2(n,k,\epsilon)+\cdots\right]\\
	=& \frac{1}{4}\left[\epsilon^{-3}
	+\frac{1}{4}(n+2k-2)^2 \epsilon^{n+2k-7}
	-\frac{1}{2}(n+2k)(n+2k-4)\epsilon^{n+2k-5}\right.\\
	&\left. +\frac{1}{16}(n+2k-4)^4 \epsilon^{2n+4k-11}
	+O\left(\epsilon^{n+2k-4}\right)\right],
\end{aligned}\label{1/a}
\end{equation*}
where
\begin{align*}
	Q(n,k,\epsilon)
	=&\frac{1}{4}(n+2k-2)^2\epsilon^{n+2k-4}
	-\frac{1}{2}(n+2k)(n+2k-4)\epsilon^{n+2k-2}\\
	+&\frac{1}{4}(n+2k-2)^2\epsilon^{n+2k}
	-\epsilon^{2n+4k-4}.
\end{align*}
Thus, we may calculate that the quadratic equation for $\xi$ has roots
\begin{align*}
	\xi&=\frac{1}{8} \left[
	\epsilon^{-3}+
	\frac{1}{4}(n+2k-2)^2 \epsilon^{n+2k-7}
	-\frac{1}{2}(n+2k)(n+2k-4)\epsilon^{n+2k-5}\right.\\
	&\left. +\frac{1}{16}(n+2k-4)^4 \epsilon^{2n+4k-11}+ O\left(\epsilon^{n+2k-4}\right)
	\right] \\
	&\times \left\{4(n+2k-4)(n+k-2)^2
	+4k^2(n+2k)\epsilon^{3}\right.\\
	&\left.~+4k(n+2k)(n+2k-4)(n+k-2)\epsilon^{n+2k-2}\right.\\
	&\left.~+4(n+2k-2)^2 \left[(k+1)n+k^2-2k-4\right]\epsilon^{n+2k-1}\right.\\
	&\left.~-4(n+2k-2)^2 \left[(k+1)n+k^2-2k-4\right]\epsilon^{n+2k}\right.\\
	&\left.~-4k(n+2k)(n+2k-4)(n+k-2)\epsilon^{n+2k+1}\right.\\
	&\left.~-4k^2(n+2k) \epsilon^{2n+4k-4}
	-4(n+2k-4)(n+k-2)^2 \epsilon^{2n+4k-1}\right.\\
	&\left.~ \pm \left[
	4(n+2k-4)(n+k-2)^2
	-4k^2(n+2k)\epsilon^{3}\right.\right.\\
	&\left.\left. \quad+4k(n+2k)(n+2k-4)(n+k-2)\epsilon^{n+2k-2}\right.\right.\\
	&\left.\left. \quad+2(n+2k-2)^2 \left((k^2+2k+2)n+2(k^3+k^2-2k-4)\right)\epsilon^{n+2k-1}\right.\right.\\
	&\left.\left. \quad-4(n+2k-2)^2 \left((k+1)n+k^2-2k-4\right)\epsilon^{n+2k}\right.\right.\\
	&\left.\left. \quad-\left(4k(n+2k)(n+2k-4)(n+k-2)+4k^2 (n+2k)^2 (n+2k-4)\right.\right.\right.\\
	&\left.\left.\left. \quad-8k^3(n+2k)^2(n+k-2)^{-1}
	-16 k^2 (n+2k)\right)\epsilon^{n+2k+1}\right.\right.\\
	&\left.\left. \quad-4 k^2 (n+2k) \epsilon^{2n+4k-4}+O\left(\epsilon^{n+2k+2}\right)
	\right]
	\right\}.
\end{align*}

To proceed, we need to compare the orders of the terms $\epsilon^{n+2k-5}$ and $\epsilon^{2n+4k-11}$ in the first two lines of the above expression for $\xi$. Thus, we have two cases.
\begin{itemize}
		\item[{\rm (i)}]  	The case $n+2k \geq 7$. The roots of the equation are
\begin{align*}
	\xi^{(1)}_{(k)}
	=&\frac{1}{8}
	\left[
	\epsilon^{-3}
	+\frac{1}{4}(n+2k-2)^2 \epsilon^{n+2k-7}
	-\frac{1}{2}(n+2k)(n+2k-4)\epsilon^{n+2k-5}\right.\\
	&\left.~+O\left(\epsilon^{n+2k-4}\right)\right]
	\cdot\left\{
	8k^2(n+2k)\epsilon^{3}
	-2k^2 (n+2k)(n+2k-2)^2\epsilon^{n+2k-1}\right.\\
	&\left.~+\left[4k^2 (n+2k)^2 (n+2k-4)-8k^3(n+2k)^2(n+k-2)^{-1}\right.\right.\\
	&\left.\left.~-16 k^2 (n+2k)\right]\epsilon^{n+2k+1}
	+O\left(\epsilon^{n+2k+2}\right)
	\right\}\\	
	=&k^2(n+2k)
	-k^2 (n+2k)\left[k(n+2k)(n+k-2)^{-1}+2\right]\epsilon^{n+2k-2}\\
	&~+O\left(\epsilon^{n+2k-1}\right),\\
	\xi^{(2)}_{(k)} =&\infty.
\end{align*}

\item[{\rm (ii)}] The case of $n+2k \leq 6$. We have two sub-cases.
\begin{itemize}
	\item[{\rm (a)}]  The case $n+2k=6$, i.e., $n=2,~k=2$ or $n=4,~k=1$. We get
	\begin{align*}
		\xi^{(1)}_{(k)}=&\frac{1}{8}
	\left[
	\epsilon^{-3}
	+\frac{1}{4}(n+2k-2)^2 \epsilon^{n+2k-7}
    +\left(\frac{1}{16}(n+2k-4)^4\right.\right.\\
	&\left.\left.~-\frac{1}{2} (n+2k)(n+2k-4)\right)\epsilon^{n+2k-5}+O\left(\epsilon^{n+2k-4}\right)\right]\\
	&\cdot\left\{
	8k^2(n+2k)\epsilon^{3}
	-2k^2 (n+2k)(n+2k-2)^2\epsilon^{n+2k-1}\right.\\
	&\left.~+\left[4k^2 (n+2k)^2 (n+2k-4)
	-8k^3(n+2k)^2(n+k-2)^{-1}
	\right.\right.\\
	&\left.\left.~-16 k^2 (n+2k)\right]\epsilon^{n+2k+1}
	+O\left(\epsilon^{n+2k+2}\right)
	\right\}\\	
		=&k^{2}(n+2k)
		-k^{2}(n+2k)\left[\frac{k(n+2k)}{n+k-2}+\frac{1}{2}(n+2k-3)^{3}\right.\\
		&\left.~+\frac{1}{2}(n+2k+1)\right]\epsilon^{n+2k-2} +O\left(\epsilon^{n+2k-1}\right),\\
		\xi^{(2)}_{(k)} =&\infty.
	\end{align*}
	\item[{\rm (b)}]  The case $n+2k=5$, i.e., $n=3,~k=1$. We get
	\begin{align*}
\xi^{(1)}_{(k)}=&\frac{1}{8}
	\left[
	\epsilon^{-3}
	+\frac{1}{4}(n+2k-2)^2 \epsilon^{n+2k-7}+\frac{1}{16}(n+2k-4)^4\epsilon^{n+2k-6}\right.\\
&\left.
	-\frac{1}{2}(n+2k)(n+2k-4)\epsilon^{n+2k-5}+O\left(\epsilon^{n+2k-4}\right)\right]\\
	&\cdot\left\{
	8k^2(n+2k)\epsilon^{3}
	-2k^2 (n+2k)(n+2k-2)^2\epsilon^{n+2k-1}
	\right.\\
	&\left.~+\left[4k^2 (n+2k)^2 (n+2k-4)
	-8k^3(n+2k)^2(n+k-2)^{-1}
	\right.\right.\\
	&\left.\left.~-16 k^2 (n+2k)\right]\epsilon^{n+2k+1}
	+O\left(\epsilon^{n+2k+2}\right)
	\right\}\\	
		=&k^{2}(n+2k)
		-k^{2}(n+2k)\left[\frac{k(n+2k)}{n+k-2}+2\right.\\
		&\left.~+\frac{1}{64}(n+2k-2)^{2} (n+2k-4)^{4} \right]\epsilon^{n+2k-2} +O\left(\epsilon^{n+2k-1}\right),\\
		\xi^{(2)}_{(k)} =&\infty.
	\end{align*}
\end{itemize}
\end{itemize}

\textbf{B.2.~The remaining~case~$n=2,~k=1$.} In this case we have
\begin{align*}
u(r,\phi)&=\left(ar+br^{-1}+cr^3+dr\log r\right)\beta(\phi),\\
\frac{\pt u}{\pt r}&=\left(a-br^{-2}+3cr^2+d(\log r+1)\right)\beta(\phi),\\
\Delta u&=\left(8cr+2dr^{-1}\right)\beta(\phi).
\end{align*}
By the boundary conditions
\begin{align*}
	\frac{\partial u}{\partial r }\bigg|_{r=1}=&\frac{\partial u}{\partial r }\bigg|_{r=\epsilon }=0,\\
	\left (\frac{\partial\Delta u }{\partial r }+\xi u  \right )\bigg|_{r=1}=&\left (-\frac{\partial\Delta u }{\partial  r}+\xi u  \right )\bigg|_{r=\epsilon }=0,
\end{align*}
we obtain a new system of linear equations with $a,b,c,d$ as the independent variables:
$$
\left\{\begin{array}{l}
	\begin{aligned}
		&a-b +3c+d=0, \\
		&a-\epsilon^{-2}b+3\epsilon^{2}c+\left(1+\log\epsilon \right)d=0,\\
		&\xi a+\xi b+\left(\xi +8\right)c-2d=0,\\
		&\xi \epsilon a+\xi \epsilon^{-1} b+\left(\xi \epsilon^{3}-8\right)c+\left(\xi \epsilon \log\epsilon +2\epsilon^{-2}\right)d=0.
	\end{aligned}
\end{array}\right.
$$
After a series of calculations, we get a quadratic equation with respect to $\xi$
\begin{align}
	f(\xi):=&A\xi^2 +B\xi +C \nonumber\\
	=&\left(
	\epsilon^{3}\log\epsilon
	+\epsilon^{3}
	-2\epsilon^{5}
	-\epsilon^{7}\log\epsilon
	+\epsilon^{7}
	\right) \xi^{2}\nonumber\\
&+
	\left(1
	+4\epsilon^{2}
	-4\epsilon^{3}\log\epsilon
	+5\epsilon^{3}-4\epsilon^{4}\log\epsilon
	-5\epsilon^{4}
	-4\epsilon^{5}
	-\epsilon^{7}
	\right) \xi\nonumber\\
&
	+4\left(-1+2\epsilon^{2}-\epsilon^{4}\right)\nonumber\\
	=&0.
\label{f}
\end{align}
Analogously, we find the following expressions
\begin{equation}
	\begin{aligned}
		&\sqrt{B^2-4AC}\\
		=&\sqrt{\left(1+4\epsilon^{2}\right)^2
			+8\epsilon^{3}\log\epsilon
			+26\epsilon^{3}
			-8\epsilon^{4}\log\epsilon
			-10\epsilon^{4}
			-64\epsilon^{5}\log\epsilon
			+O\left(\epsilon^{5}\right)}\\
		=&\left(1+4\epsilon^{2}\right)
		\cdot
		\left[1+\frac{1}{2} \left(
		\frac{8\epsilon^{3}\log\epsilon}{\left(1+4\epsilon^{2}\right)^2}
		+\frac{26\epsilon^{3}}{\left(1+4\epsilon^{2}\right)^2}
		-\frac{8\epsilon^{4}\log\epsilon}{\left(1+4\epsilon^{2}\right)^2}
		-\frac{10\epsilon^{4}}{\left(1+4\epsilon^{2}\right)^2}\right.\right.\\
		&\left.\left.\quad-\frac{64\epsilon^{5}\log\epsilon}{\left(1+4\epsilon^{2}\right)^2}\right)+O\left(\epsilon^{5}\right)
		\right]\\
		=&1
		+4\epsilon^{2}
		+4\epsilon^{3}\log\epsilon
		+13\epsilon^{3}
		-4\epsilon^{4}\log\epsilon
		-5\epsilon^{4}
		-48\epsilon^{5}\log\epsilon
		+O\left(\epsilon^{5}\right),
	\end{aligned}
	\label{b-ac}
\end{equation}
and
\begin{align}\label{1/2a}
		\frac{1}{2A}
		&=\frac{1}{2\epsilon^{3}(\log\epsilon+1)}\frac{1}{1-2\epsilon^2(\log \epsilon+1)^{-1}+O(\epsilon^4)}\nonumber \\
		&=\frac{1}{2\epsilon^{3}(\log\epsilon+1)}(1+2\epsilon^2(\log \epsilon+1)^{-1}+O(\epsilon^4)).
\end{align}
According to the formulas \eqref{b-ac} and \eqref{1/2a}, we can obtain the root of the equation \eqref{f}, i.e.,
\begin{align*}
	\xi=&\frac{1}{2\epsilon^{3}(\log\epsilon+1)}(1+2\epsilon^2(\log \epsilon+1)^{-1}+O(\epsilon^4))\\
	&\times
	\left[
	-1
	-4\epsilon^{2}
	+4\epsilon^{3}\log\epsilon
	-5\epsilon^{3}
	+4\epsilon^{4}\log\epsilon
	+5\epsilon^{4}
	+4\epsilon^{5}
	+\epsilon^{7}\right.\\
	&\left.~\pm
	\left(
	1
	+4\epsilon^{2}
	+4\epsilon^{3}\log\epsilon
	+13\epsilon^{3}
	-4\epsilon^{4}\log\epsilon
	-5\epsilon^{4}
	-48\epsilon^{5}\log\epsilon
	+O\left(\epsilon^{5}\right)
	\right)
	\right].
\end{align*}
Thus, we get
\begin{align*}
	\xi^{(1)}_{(1)} =&\frac{1}{2\epsilon^{3}(\log\epsilon+1)}(1+2\epsilon^2(\log \epsilon+1)^{-1}+O(\epsilon^4))\\
	&\times\left(8\epsilon^3(\log \epsilon+1)-48\epsilon^5 \log \epsilon+O(\epsilon^5)\right)\\
=& 4-24\epsilon^{2}+O\left(\epsilon^{2} \log^{-1}\epsilon\right),\\
	\xi^{(2)}_{(1)} =&\infty.
\end{align*}

Last, we have
\begin{align*}
	\left|  \partial \left( \mathbb {B}^{n}_{1}  \setminus \overline{\mathbb {B}^{n}_{\epsilon}} \right)\right| ^{\frac{3}{n-1}}
	 =&
		\left|  \partial \mathbb {B}^{n}_{1} \right| ^{\frac{3}{n-1}}
		(1+\epsilon^{n-1})^{\frac{3}{n-1}} \\
	=&
	\left|  \partial \mathbb {B}^{n}_{1} \right| ^{\frac{3}{n-1}} \left(1+\frac{3}{n-1}\epsilon^{n-1}+O\left(\epsilon^{2n-2}\right)\right),
\end{align*}
where we used the expansion
\begin{align*}
(1+x)^{a}=1+ax+\frac{a(a-1)}{2!}x^{2}+\frac{a(a-1)(a-2)}{3!}x^{3}+\cdots.
\end{align*}
Hence, we may get the conclusion in each case.
\begin{itemize}
	\item[{\rm (i)}]  When $n+2k \geq 7$,
	\begin{align*}
		&\xi_{(k)}\left(\mathbb {B}^{n}_{1} \setminus \overline{\mathbb {B}^{n}_{\epsilon}}\right) \left|  \partial \left( \mathbb {B}^{n}_{1}  \setminus \overline{\mathbb {B}^{n}_{\epsilon}} \right)\right| ^{\frac{3}{n-1}} \\
		=& \left|  \partial \mathbb {B}^{n}_{1} \right|^{\frac{3}{n-1}} \left\{k^2(n+2k)
		-k^2 (n+2k)\left[k(n+2k)(n+k-2)^{-1}\right.\right.\\
		&\left.\left.~+2\right]\epsilon^{n+2k-2}
		+O\left(\epsilon^{n+2k-1}\right)\right\}
		\cdot
		 \left(1+\frac{3}{n-1}\epsilon^{n-1}+O\left(\epsilon^{2n-2}\right)\right) \\
		 =&\left|  \partial \mathbb {B}^{n}_{1} \right|^{\frac{3}{n-1}} \left(k^2 (n+2k)+\frac{3 k^2 (n+2k)}{n-1}\epsilon^{n-1}
		 +O\left(\epsilon^{n}\right)\right) \\
		 >&k^2 (n+2k) \left|  \partial \mathbb {B}^{n}_{1} \right|^{\frac{3}{n-1}}\\
		 =& \xi_{(k)}\left(\mathbb {B}^{n}_{1} \right) \left|  \partial \mathbb {B}^{n}_{1} \right|^{\frac{3}{n-1}}.
	\end{align*}
	\item[{\rm (ii)}]  When $n+2k =6$,
	\begin{align*}
		&\xi_{(k)}\left(\mathbb {B}^{n}_{1} \setminus \overline{\mathbb {B}^{n}_{\epsilon}}\right) \left|  \partial \left( \mathbb {B}^{n}_{1}  \setminus \overline{\mathbb {B}^{n}_{\epsilon}} \right)\right| ^{\frac{3}{n-1}} \\
		=& \left\{
		k^{2}(n+2k)
		-k^{2}(n+2k)\left[\frac{k(n+2k)}{n+k-2}+\frac{1}{2}(n+2k-3)^{3}\right.\right.\\
		&\left.\left.~+\frac{1}{2}(n+2k+1)\right]\epsilon^{n+2k-2} +O\left(\epsilon^{n+2k-1}\right) \right\} \\
		&\times
		\left(1+\frac{3}{n-1}\epsilon^{n-1}+O\left(\epsilon^{2n-2}\right)\right)  \left|  \partial \mathbb {B}^{n}_{1} \right|^{\frac{3}{n-1}}\\
=&\left|  \partial \mathbb {B}^{n}_{1} \right|^{\frac{3}{n-1}} \cdot  \left(k^2 (n+2k)+\frac{3k^2(n+2k)}{n-1}\epsilon^{n-1}+O\left(\epsilon^{n}\right)\right) \\
		>&k^2 (n+2k) \left|  \partial \mathbb {B}^{n}_{1} \right|^{\frac{3}{n-1}}\\
		=& \xi_{(k)}\left(\mathbb {B}^{n}_{1} \right) \left|  \partial \mathbb {B}^{n}_{1} \right|^{\frac{3}{n-1}}.
	\end{align*}
	\item[{\rm (iii)}]  When $n+2k = 5$,
	\begin{align*}
		&\xi_{(k)}\left(\mathbb {B}^{n}_{1} \setminus \overline{\mathbb {B}^{n}_{\epsilon}}\right) \left|  \partial \left( \mathbb {B}^{n}_{1}  \setminus \overline{\mathbb {B}^{n}_{\epsilon}} \right)\right| ^{\frac{3}{n-1}} \\
		=& \left|  \partial \mathbb {B}^{n}_{1} \right|^{\frac{3}{n-1}} \left\{
		k^{2}(n+2k)
		-k^{2}(n+2k)\left[\frac{k(n+2k)}{n+k-2}+2\right.\right.\\
		&\left.\left.~+\frac{1}{64}(n+2k-2)^{2} (n+2k-4)^{4} \right]\epsilon^{n+2k-2} +O\left(\epsilon^{n+2k-1}\right) \right\} \\
		&\times
		\left(1+\frac{3}{n-1}\epsilon^{n-1}+O\left(\epsilon^{2n-2}\right)\right) \\
		=&\left|  \partial \mathbb {B}^{n}_{1} \right|^{\frac{3}{n-1}} \cdot  \left(k^2 (n+2k)+\frac{3k^2(n+2k)}{n-1}\epsilon^{n-1}+O\left(\epsilon^{n}\right)\right) \\
		>&k^2 (n+2k) \left|  \partial \mathbb {B}^{n}_{1} \right|^{\frac{3}{n-1}}\\
		=& \xi_{(k)}\left(\mathbb {B}^{n}_{1} \right) \left|  \partial \mathbb {B}^{n}_{1} \right|^{\frac{3}{n-1}}.
	\end{align*}
	\item[{\rm (iv)}]  When $n=2,~k=1$,
	\begin{align*}
		&\xi_{(1)}\left(\mathbb {B}^{2}_{1} \setminus \overline{\mathbb {B}^{2}_{\epsilon}}\right) \left|  \partial \left( \mathbb {B}^{2}_{1}  \setminus \overline{\mathbb {B}^{2}_{\epsilon}} \right)\right|^{3}  \\
		=& \left|  \partial \mathbb {B}^{2}_{1} \right|^{3} \left( 4-24\epsilon^{2}+O\left(\epsilon^{2} \log^{-1}\epsilon\right)\right)\cdot
		\left(1+3\epsilon+O\left(\epsilon^{2}\right)\right)  \\
		=&\left|  \partial \mathbb {B}^{2}_{1} \right|^{3} \left(4+12\epsilon+O\left(\epsilon^{2}\right)\right) \\
		>&4 \left|  \partial \mathbb {B}^{2}_{1} \right|^{3}\\
		=& \xi_{(1)}\left(\mathbb {B}^{2}_{1} \right) \left|  \partial \mathbb {B}^{2}_{1} \right|^{3}.
	\end{align*}
\end{itemize}
So the proof is complete.
\end{proof}

\subsection{The spectrum for cylinders}
Next we consider the problem for a cylinder.
\begin{proof}[Proof of Theorem~\ref{thm2}]
The Steklov spectrum of the cylinder $C_{L}$ can be computed by using separation of variables.
	For a cylinder, we can write the eigenfunction as the product of a radial function of the axial variable $s \in [-L,L]$ and a basis function $\beta_{k}$ on $M$.
	
	Let the solution $u(s,x)$ of separate variables to the problem $\eqref{problem1}$  be
	\begin{align*}
		u(s,x)=T(s) \beta(x), \quad s \in [-L,L],~x \in M.
	\end{align*}
We know that the cylinder $C_{L}$ is equipped with the metric $g_{C_{L}}=ds^2+g_{M}$. So the Laplace operator $\Delta_{C_{L}}$ on $C_{L}$ can be expressed as
\begin{align*}
	\Delta_{C_{L}}=\partial^{2}_{s} + \Delta_{M}.
\end{align*}
Thus,
	\begin{align*}
		\Delta_{C_{L}} u
		=&\Delta_{C_{L}} \left(T(s) \beta(x)\right)\\
		=&T^{(2)}(s) \beta(x)+T(s) \Delta_{M} \beta(x)\\
		=&T^{(2)}(s) \beta(x)-c_{k}^{2} T(s)  \beta(x), \\
		\Delta^{2}_{C_{L}} u
		=&\Delta_{C_{L}} \left(T^{(2)}(s) \beta(x)-c_{k}^{2} T(s) \beta(x)\right)\\
		=&T^{(4)}(s) \beta(x)+T^{(2)}(s)  \Delta_{M} \beta(x)-c_{k}^{2} T^{(2)}(s) \beta(x)-c_{k}^{2} T(s) \Delta_{M} \beta(x)\\
		=&T^{(4)}(s) \beta(x)-2 c_{k}^{2} T^{(2)}(s) \beta(x)+c_{k}^{4} T(s) \beta(x)\\
		=&0,
	\end{align*}
where we used $\eqref{DSigma1}$ for the term $\Delta_M \beta(x)$. The eigenvalue problem then reduces to an ordinary differential equation for the axial part
	\begin{align*}
		T^{(4)}(s)-2 c_{k}^{2} T^{(2)}(s) +c_{k}^{4} T(s) =0.
	\end{align*}
	The solutions to the above ordinary differential equation will give the radial part of the Steklov eigenfunctions for the cylinder.
	\begin{itemize}
		\item[{\rm (i)}]  When $k=0$,
		\begin{align*}
			T(s)=d_{1} +s d_2 +s^2 d_{3} +s^3 d_{4}.
		\end{align*}
		\item[{\rm (ii)}]  When $k \geq 1$,
			\begin{align*}
			T(s)=e^{c_{k}s} d_{1} +s e^{c_{k}s} d_2  +e^{-c_{k}s} d_{3} +s e^{-c_{k}s} d_{4} .
		\end{align*}
	\end{itemize}

	The Steklov eigenvalues for the cylinder $C_{L}$ can be found by imposing the boundary conditions, which will lead to a system of linear equations. For the boundary conditions in $\eqref{problem1}$, we choose the outward unit normal vector $\nu=\partial/\partial s$ and $\nu=-\partial/\partial s$ on $\{L\}\times M$ and $\{-L\}\times M$, respectively. As in the proof of Theorem $\ref{thm1}$, we proceed as follows.
	\begin{itemize}
		\item[{\rm (i)}]  When $k=0$, we get
\begin{align*}
T'(s)&=d_2+2sd_3+3s^2d_4,\\
T^{(2)}(s)&=2d_3+6sd_4,\\
\Delta_{C_L}u&=T^{(2)}(s)\beta(x)=(2d_3+6sd_4)\beta(x),
\end{align*}
and so
\begin{align*}
			\left\{\begin{array}{l}
				\begin{aligned}
					&d_{2}+2Ld_{3}+3L^2 d_{4}=0, \\
					&d_{2}-2Ld_{3}+3L^2 d_{4}=0, \\
					&\xi d_{1}+\xi L d_{2}+\xi L^2 d_{3}+\left(\xi L^3+6\right)d_{4}=0,\\
					&\xi d_{1}-\xi L d_{2}+\xi L^2 d_{3}-\left(\xi L^3+6\right)d_{4}=0.
				\end{aligned}
			\end{array}\right.
\end{align*}
	There exists a non-zero solution of the above system of linear equations if and only if
	\begin{align*}
		L^{4}  \xi^2-3 L \xi=0.
	\end{align*}
   So we get
   \begin{align*}
   	\xi^{(0;1)}=0, \quad
   	\xi^{(0;2)}=\frac{3}{L^3}.
   \end{align*}
		\item[{\rm (ii)}]  When $k \geq 1$, we get
\begin{align*}
T'(s)&=c_ke^{c_{k}s} d_{1} +(1+c_ks)e^{c_{k}s} d_2  -c_ke^{-c_{k}s} d_{3} +(1-c_ks) e^{-c_{k}s} d_{4},\\
T^{(2)}(s)&=c_k^2e^{c_{k}s} d_{1} +(2c_k+c_k^2s)e^{c_{k}s} d_2  +c_k^2e^{-c_{k}s} d_{3} +(-2c_k+c_k^2s) e^{-c_{k}s} d_{4},\\
\Delta_{C_L}u&=(T^{(2)}(s)-c_k^2T(s))\beta(x)\nonumber \\
&=(2c_ke^{c_{k}s} d_2 -2c_k e^{-c_{k}s} d_{4})\beta(x),
\end{align*}
and so
\begin{align*}
		\left\{\begin{array}{l}
			\begin{aligned}
				&c_{k} e^{c_{k} L}d_{1}+\left(e^{c_{k} L}+c_{k} L e^{c_{k} L}\right)d_{2}-c_{k} e^{-c_{k} L}d_{3}+\left(e^{-c_{k} L}-c_{k} L e^{-c_{k} L}\right)d_{4}=0, \\
				&c_{k} e^{-c_{k} L}d_{1}+\left(e^{-c_{k} L}-c_{k} L e^{-c_{k} L}\right)d_{2}-c_{k} e^{c_{k} L}d_{3}+\left(e^{c_{k} L}+c_{k} L e^{c_{k} L}\right)d_{4}=0, \\
				&\xi e^{c_{k} L}d_{1}+\left(2c_{k}^{2} e^{c_{k}L}+\xi L e^{c_{k}L}\right) d_{2}+\xi e^{-c_{k}L} d_{3}+\left(2 c_{k}^{2} e^{-c_{k}L}+\xi L e^{-c_{k}L}\right)d_{4}=0,\\
				&\xi e^{-c_{k} L}d_{1}-\left(2c_{k}^{2} e^{-c_{k}L}+\xi L e^{-c_{k}L}\right) d_{2}+\xi e^{c_{k}L} d_{3}-\left(2c_{k}^{2} e^{c_{k}L}+\xi L e^{c_{k}L}\right)d_{4}=0.
			\end{aligned}
		\end{array}\right.
	\end{align*}
The above system of linear equations has a non-zero solution if and only if $\xi$ satisfies
\begin{align*}
	&\left[16 c_{k}^{2} L^{2} e^{4 c_{k} L}-\left(e^{4 c_{k} L}-1\right)^2\right] \xi^{2}
	+\left[4 c_{k}^{3}  \left(e^{8 c_{k} L}-1\right)+32 c_{k}^{4} L e^{4c_{k} L}\right] \xi\\
	&\quad -4c_{k}^{6}\left(e^{4c_{k} L}-1\right)^2
	=0.
\end{align*}
Thus, the eigenvalues $\xi^{(k)}$ are given by
\begin{align*}
	\xi^{(k;1)}=\frac{2c_{k}^{3} \left(e^{2Lc_{k}}+1\right)^2}{e^{4Lc_{k}}-4Lc_{k}e^{2Lc_{k}}-1}, \quad
	\xi^{(k;2)}=\frac{2c_{k}^{3} \left(e^{2Lc_{k}}-1\right)^2}{e^{4Lc_{k}}+4L c_{k}e^{2Lc_{k}}-1}.
\end{align*}
	\end{itemize}

Then the corresponding eigenfunctions can be given as in the theorem.
\end{proof}

\section{Results for the fourth-order Steklov eigenvalue problem of the type two}


\subsection{The asymptotic expansion of the spectrum on annular domains}
In this section, we shall prove  Theorem~\ref{thm3}.

\begin{proof}[Proof of Theorem~\ref{thm3}]
Again we use the method of separating variables and we assume that the eigenfunction $u$ is of the form $u(r,\phi)=\alpha(r)\beta(\phi)$. For the annular domain $\mathbb{B}^{n}_{1} \setminus \overline{\mathbb{B}^{n}_{\epsilon}}$, we still choose the outward unit normal vector $\nu=\partial/\partial r$ and $\nu=-\pt/\pt r$ on $\partial \mathbb{B}^{n}_{1}$ and $\partial \mathbb{B}^{n}_{\epsilon}$, respectively. Therefore, the boundary conditions of the problem \eqref{problem2} are described as follows:
	\begin{equation}
		\begin{aligned}
			u|_{r=1}=&u|_{r=\epsilon }=0,\\
			\left (\Delta u-\eta \frac{\partial u }{\partial r } \right )\bigg|_{r=1}=&\left (\Delta u+\eta \frac{\partial u }{\partial r } \right )\bigg|_{r=\epsilon }=0.
		\end{aligned}
		\label{boundary2}
	\end{equation}
Next we discuss the radial function in different cases.

\textbf{A. The case $n+2k \geq 3$ and $n+2k\neq 4$.} By combining Proposition $\ref{prop1}$ and the boundary conditions \eqref{boundary2}, we obtain a system of linear equations with $a,b,c,d$ as the independent variables:
\begin{equation*}
\left\{\begin{array}{l}
	\begin{aligned}
		&a+b +c+d=0, \\
		&\epsilon^{k}a+\epsilon^{2-n-k} b+\epsilon^{k+2}c+\epsilon^{4-n-k} d=0, \\
		&-k \eta a+\left(n+k-2\right)\eta b+\left[\left( 2n+4k \right) -\left( k+2 \right)\eta \right]c\\
		&~+\left[ \left(8-2n-4k \right) +\left(n+k-4 \right )\eta \right ]d=0,\\
		&k \epsilon^{k-1}\eta a+\left( 2-n-k  \right )\epsilon^{1-n-k}\eta b+\left[ \left( 2n+4k \right)\epsilon^{k} +\left( k+2 \right)\epsilon^{k+1}\eta \right]c\\
		&~+\left[ \left(8-2n-4k \right)\epsilon^{2-n-k} -\left(n+k-4\right)\epsilon^{3-n-k}\eta \right]d=0.		
	\end{aligned}
\label{eta equ}
\end{array}\right.
\end{equation*}
To ensure that the above system of linear equations has a non-zero solution, we get
\begin{align*}
	0&=
	\begin{vmatrix}
		1 & 1 & 1 & 1\\
		\epsilon ^{k} & \epsilon^{2-n-k} & \epsilon^{k+2} & \epsilon^{4-n-k}\\
		-k\eta  & \left(n+k-2\right)\eta  &  2n+4k-\left(k+2\right)\eta  &  b_{34}\\
		k\epsilon^{k-1}\eta &  \left(2-n-k\right)\epsilon^{1-n-k} \eta  & \begin{aligned} &\left(2n+4k\right)\epsilon^{k}\\
			&+\left(k+2\right)\epsilon ^{k+1} \eta
		\end{aligned} & b_{44}
	\end{vmatrix}\\
	&=\begin{vmatrix}
		\epsilon^{2-n-k}-\epsilon^{k} & \epsilon^{k+2}-\epsilon^{k} & \epsilon^{4-n-k}-\epsilon^{k}\\
		(n+2k-2)\eta  & 2n+4k-2\eta  &   b_{34}+k \eta  \\
		\begin{aligned}
			&-k\epsilon ^{k-1} \eta\\
			+&\left(2-n-k\right)\epsilon ^{1-n-k} \eta
		\end{aligned}
		 &\begin{aligned} &\left(2n+4k\right)\epsilon^{k}\\
			+&\left[(k+2) \epsilon ^{k+1}-k \epsilon ^{k-1}\right] \eta
		\end{aligned} & b_{44}-k\epsilon^{k-1} \eta
	\end{vmatrix}\\
	&=A\eta^2+B\eta+C=:g(\eta),
\end{align*}
where
\begin{align*}
b_{34}:=8-2n-4k+\left(n+k-4\right)\eta,
\end{align*}
and
\begin{align*}
b_{44}:=\left(8-2n-4k\right)\epsilon^{2-n-k}-\left(n+k-4\right)\epsilon ^{3-n-k} \eta.
\end{align*}
We can find $A,B,C$ as follows:
\begin{align*}
	C&=g(0)\\
	&=\begin{vmatrix}
		\epsilon^{2-n-k}-\epsilon^{k} & \epsilon^{k+2}-\epsilon^{k} & \epsilon^{4-n-k}-\epsilon^{k}\\
		0  & 2n+4k  &   8-2n-4k  \\
		0
		 &\left(2n+4k\right)\epsilon^{k}
		&\left(8-2n-4k\right)\epsilon^{2-n-k}
	\end{vmatrix}\\
	&=-4(n+2k)(n+2k-4)\left( \epsilon^{2k}
	-2\epsilon^{2-n}+\epsilon^{4-2n-2k} \right),
\end{align*}
\begin{align*}
	B&=g'(0)\\
	&=\begin{vmatrix}
		\epsilon^{2-n-k}-\epsilon^{k} & \epsilon^{k+2}-\epsilon^{k} & \epsilon^{4-n-k}-\epsilon^{k}\\
		n+2k-2 & -2  &  n+2k-4  \\
		0
		&\left(2n+4k\right)\epsilon^{k}
		&\left(8-2n-4k\right)\epsilon^{2-n-k}
	\end{vmatrix}\\
	&~+\begin{vmatrix}
		\epsilon^{2-n-k}-\epsilon^{k} & \epsilon^{k+2}-\epsilon^{k} & \epsilon^{4-n-k}-\epsilon^{k}\\
		0  & 2n+4k  &   8-2n-4k  \\
		\begin{aligned}
			&-k\epsilon ^{k-1}\\
			+&\left(2-n-k\right)\epsilon ^{1-n-k}
		\end{aligned}
		&(k+2) \epsilon ^{k+1}-k \epsilon ^{k-1}
		 & \begin{aligned}
		 	&-k\epsilon ^{k-1}\\
		 	-&\left(n+k-4\right)\epsilon ^{3-n-k}
		 \end{aligned}
	\end{vmatrix}\\
	&=-4(n+2k) \epsilon^{2k}
	-4(n+2k-4)\epsilon^{2k+1}
	-4(n+2k-2)^2 \epsilon^{1-n} \\
	&~-4(n+2k)(n+2k-4)\epsilon^{2-n}
	+4(n+2k)(n+2k-4)\epsilon^{3-n} \\
	&~+4(n+2k-2)^2 \epsilon^{4-n}
	+4(n+2k-4)\epsilon^{4-2n-2k}
	+4(n+2k)\epsilon^{5-2n-2k},
\end{align*}
and
\begin{align*}
	A&=g''(0)/2\\
	&=\begin{vmatrix}
		\epsilon^{2-n-k}-\epsilon^{k} & \epsilon^{k+2}-\epsilon^{k} & \epsilon^{4-n-k}-\epsilon^{k}\\
		n+2k-2 & -2  &  n+2k-4  \\
		\begin{aligned}
			&-k\epsilon ^{k-1}\\
			+&\left(2-n-k\right)\epsilon ^{1-n-k}
		\end{aligned}
		&(k+2) \epsilon ^{k+1}-k \epsilon ^{k-1}
		& \begin{aligned}
			&-k\epsilon ^{k-1}\\
			-&\left(n+k-4\right)\epsilon ^{3-n-k}
		\end{aligned}
	\end{vmatrix}\\
	&=-4\epsilon^{2k+1}
	-4\epsilon^{5-2n-2k}
	+(n+2k-2)^2\epsilon^{5-n}\\
	&~-2(n+2k)(n+2k-4)\epsilon^{3-n}
	+(n+2k-2)^2\epsilon^{1-n}.
\end{align*}

Multiplying $A,B,C$ by
  $-\epsilon^{2n+2k-4}$, and denoting the resulting quantities by the same symbols, we get
  \begin{align*}
	A=&
	4\epsilon
	-(n+2k-2)^2\epsilon^{n+2k-3}\\
	&+2(n+2k)(n+2k-4)\epsilon^{n+2k-1}
	-(n+2k-2)^2\epsilon^{n+2k+1}
	+4\epsilon^{2n+4k-3},\\
	B=&
	-4(n+2k-4)
	-4(n+2k)\epsilon
	+4(n+2k-2)^2 \epsilon^{n+2k-3} \\
	&~+4(n+2k)(n+2k-4)\epsilon^{n+2k-2}
	-4(n+2k)(n+2k-4)\epsilon^{n+2k-1} \\
	&~-4(n+2k-2)^2 \epsilon^{n+2k}
	+4(n+2k) \epsilon^{2n+4k-4}
	+4(n+2k-4)\epsilon^{2n+4k-3},\\
	C=&
	4(n+2k)(n+2k-4)\left( 1
	-2\epsilon^{n+2k-2}+\epsilon^{2n+4k-4}\right).
\end{align*}

Now, we first assume $n+2k \geq 5$. Then we get
\begin{align*}
	&B^2-4AC\\
=&P(n,k,\epsilon)^2+D(n,k)\epsilon^{n+2k-3}+E(n,k)\epsilon^{n+2k-2}+O\left(\epsilon^{n+2k-1}\right)\\
=&P(n,k,\epsilon)^2\left(1+\frac{D(n,k)}{P(n,k,\epsilon)^2}\epsilon^{n+2k-3}+\frac{E(n,k)}{P(n,k,\epsilon)^2}\epsilon^{n+2k-2}+O\left(\epsilon^{n+2k-1}\right)\right),
\end{align*}
where
\begin{align*}
P(n,k,\epsilon)&:=4(n+2k-4)-4(n+2k)\epsilon,\\
D(n,k)&:=4^2 (n+2k-4)(n+2k-2)^3,\\
E(n,k)&:=-2\cdot 4^2 (n+2k)\left[(n+2k-4)^2+(n+2k-2)^2\right].
\end{align*}
So we have
\begin{align*}
	&\sqrt{B^2-4AC}\\
=&P(n,k,\epsilon)\left(1+\frac{D(n,k)}{P(n,k,\epsilon)^2}\epsilon^{n+2k-3}+\frac{E(n,k)}{P(n,k,\epsilon)^2}\epsilon^{n+2k-2}+O\left(\epsilon^{n+2k-1}\right)\right)^{1/2}\\
=&P(n,k,\epsilon)\left(1+\frac{D(n,k)}{2P(n,k,\epsilon)^2}\epsilon^{n+2k-3}+\frac{E(n,k)}{2P(n,k,\epsilon)^2}\epsilon^{n+2k-2}+O\left(\epsilon^{n+2k-1}\right)\right)\\
=&P(n,k,\epsilon)+\frac{D(n,k)}{2P(n,k,\epsilon)}\epsilon^{n+2k-3}+\frac{E(n,k)}{2P(n,k,\epsilon)}\epsilon^{n+2k-2}+O\left(\epsilon^{n+2k-1}\right)\\
=&P(n,k,\epsilon)+\frac{D(n,k)}{2P(n,k,0)}\epsilon^{n+2k-3}\\
&+\left(\frac{E(n,k)}{2P(n,k,0)}+\frac{D(n,k)}{2}\frac{n+2k}{4(n+2k-4)^2}\right)\epsilon^{n+2k-2}+O\left(\epsilon^{n+2k-1}\right),
\end{align*}
where we used
\begin{align*}
\frac{1}{P(n,k,\epsilon)}
	=&\frac{1}{4(n+2k-4)} \cdot \left(1-\frac{n+2k}{n+2k-4}\epsilon\right)^{-1}\\
	=&\frac{1}{4(n+2k-4)} \cdot \left[
	1
	+\frac{n+2k}{n+2k-4}\epsilon
	+ O\left(\epsilon^{2}\right)
	\right].\\
\end{align*}
Simplifying the above expression, we get
\begin{align*}
	&\sqrt{B^2-4AC}\\
=&4(n+2k-4)-4(n+2k)\epsilon
	+2(n+2k-2)^3\epsilon^{n+2k-3}\\
	&+2(n+2k) \left[(n+2k-2)^2-2(n+2k-4)\right]\epsilon^{n+2k-2}
	+O\left(\epsilon^{n+2k-1}\right).
\end{align*}
Besides we deduce
\begin{equation*}
\begin{aligned}
		\frac{1}{A}
		=& \frac{1}{4\epsilon \left[1-\dfrac{(n+2k-2)^2}{4}\epsilon^{n+2k-4}+O\left(\epsilon^{n+2k-2}\right)\right]}\\
		=& \frac{1}{4\epsilon}\left[1+\frac{(n+2k-2)^2}{4}\epsilon^{n+2k-4}+\frac{(n+2k-2)^4}{16}\epsilon^{2n+4k-8}+O\left(\epsilon^{n+2k-2}\right)\right]\\
		=& \frac{1}{4}\left[\epsilon^{-1}+\frac{(n+2k-2)^2}{4}\epsilon^{n+2k-5}+\frac{(n+2k-2)^4}{16}\epsilon^{2n+4k-9}+O\left(\epsilon^{n+2k-3}\right)\right].
\end{aligned}
\label{21a}
\end{equation*}
Thus, we may calculate that the quadratic equation for $\eta$ has roots
\begin{align*}
	\eta&=\frac{1}{8} \left[\epsilon^{-1}+\frac{(n+2k-2)^2}{4}\epsilon^{n+2k-5}+\frac{(n+2k-2)^4}{16}\epsilon^{2n+4k-9}+O\left(\epsilon^{n+2k-3}\right)\right]\\
	&\cdot \left\{
	4(n+2k-4)
	+4(n+2k)\epsilon
	-4(n+2k-2)^2 \epsilon^{n+2k-3} \right.\\
	&\left.~-4(n+2k)(n+2k-4)\epsilon^{n+2k-2}
	+4(n+2k)(n+2k-4)\epsilon^{n+2k-1} \right.\\
	&\left.~+4(n+2k-2)^2 \epsilon^{n+2k}
	-4(n+2k) \epsilon^{2n+4k-4}
	-4(n+2k-4)\epsilon^{2n+4k-3}\right.\\
	&\left.~ \pm \left[
	4(n+2k-4)-4(n+2k)\epsilon
	+2(n+2k-2)^3\epsilon^{n+2k-3}\right.\right.\\
	&\left.\left.~+2(n+2k) \left[(n+2k-2)^2-2(n+2k-4)\right]\epsilon^{n+2k-2}
	+O\left(\epsilon^{n+2k-1}\right)
	\right]
	\right\}.
\end{align*}

To proceed, we need to compare the orders of the terms $\epsilon^{n+2k-5}$ and $\epsilon^{2n+4k-9}$ in the first line of the above expression for $\eta$. Thus, we have two cases.
\begin{itemize}
	\item[{\rm (i)}]  The case  $n+2k \geq 6$. The roots of the equation are
	\begin{align*}
		\eta^{(1)}_{(k)}
		&= \frac{1}{8}
		\left[
		\epsilon^{-1}+\frac{(n+2k-2)^2}{4}\epsilon^{n+2k-5}+O\left(\epsilon^{n+2k-3}\right)\right]
		\cdot\left\{
		8(n+2k)\epsilon\right.\\
		&\left.~-2(n+2k)(n+2k-2)^2\epsilon^{n+2k-3}
		-2(n+2k)(n+2k-2)^2 \epsilon^{n+2k-2}\right.\\
		&\left.~+O\left(\epsilon^{n+2k-1}\right)
		\right\}\\	
		&=n+2k
		-\frac{1}{4}(n+2k)(n+2k-2)^2\epsilon^{n+2k-3}
		+O\left(\epsilon^{n+2k-2}\right),\\
		\eta^{(2)}_{(k)} &=\infty.
	\end{align*}
	\item[{\rm (ii)}]  The case  $n+2k=5$. The roots of the equation are
\begin{align*}
		\eta^{(1)}_{(k)}&= \frac{1}{8} \left[\epsilon^{-1}+\frac{(n+2k-2)^2}{4}\epsilon^{n+2k-5}+\frac{(n+2k-2)^4}{16}\epsilon^{n+2k-4}\right.\\
		&\left.+O\left(\epsilon^{n+2k-3}\right)\right]
       \cdot\left\{8(n+2k)\epsilon
		-2(n+2k)(n+2k-2)^2\epsilon^{n+2k-3}
		\right.\\
&-2(n+2k)(n+2k-2)^2 \epsilon^{n+2k-2}\left. +O\left(\epsilon^{n+2k-1}\right)\right\}\\	
		&=n+2k-\frac{1}{8} \left[2(n+2k)(n+2k-2)^2+\frac{1}{2}(n+2k)(n+2k-2)^4 \right.\\
 &\left. -\frac{1}{2}(n+2k)(n+2k-2)^4\right]\epsilon^{n+2k-3}
		+O\left(\epsilon^{n+2k-2}\right)\\
		&=n+2k-\frac{1}{4}(n+2k)(n+2k-2)^2\epsilon^{n+2k-3}+O\left(\epsilon^{n+2k-2}\right),\\
		\eta^{(2)}_{(k)} &=\infty.
\end{align*}		
\end{itemize}

Last, we consider the remaining case $n+2k = 3~(i.e., ~n=3,~k=0)$. In this case, the determinant of the coefficient matrix equal to zero is equivalent to the following quadratic equation with one variable
\begin{align*}
	\left(\epsilon-1\right)^{4} \eta^{2}+8\left(\epsilon-1\right)^{3} \eta+12\left(\epsilon-1\right)^{2} =0.
\end{align*}
Obviously, the roots of the equation are
		\begin{align*}
			\eta^{(1)}_{(0)}
			= \frac{2}{1-\epsilon}
			=2+2\epsilon +O\left(\epsilon^{2}\right),\quad
			\eta^{(2)}_{(0)}
			= \frac{6}{1-\epsilon}
			=6+6\epsilon +O\left(\epsilon^{2}\right).
		\end{align*}

\textbf{B. The case $n+2k=2$, i.e., $n=2$,  $k=0$.}
In this case we have
\begin{align*}
u(r,\phi)&=\left(a+b\log r+cr^2+dr^2\log r\right)\beta(\phi),\\
\frac{\pt u}{\pt r}&=\left(br^{-1}+2cr+d(2r\log r+r)\right)\beta(\phi),\\
\Delta u&=\left(4c+d(4\log r+4)\right)\beta(\phi).
\end{align*}
By the boundary conditions,
we can obtain the following system of linear equations with $a,b,c,d$ as the independent variables:
$$
\left\{\begin{array}{l}
	\begin{aligned}
		&a+c=0, \\
		&a+\log\epsilon b+\epsilon^{2}c+\epsilon^{2} \log\epsilon d=0,\\
		&-\eta b+\left(4-2\eta\right)c+\left(4-\eta\right)d=0,\\
		&\eta \epsilon^{-1} b+\left(4+2\eta \epsilon\right)c+\left(4\log\epsilon+4+2\eta \epsilon \log\epsilon +\eta \epsilon\right)d=0.
	\end{aligned}
\end{array}\right.
$$
To ensure that the above system of linear equations has non-zero solutions, the eigenvalue $\eta$ must satisfy
\begin{equation}
	\begin{aligned}
	f(\eta):=&A\eta^2 +B\eta +C\\
	=&\left(1
		-4\epsilon^{2} \log^{2}\epsilon
		-2\epsilon^{2}
		+\epsilon^{4} \right) \eta^{2}+
		\left(-4
		-8\epsilon \log^{2}\epsilon
		-8\epsilon\log\epsilon
		-4\epsilon
		 \right.\\
		&\left.~+8\epsilon^2 \log^{2}\epsilon
		-8\epsilon^2 \log\epsilon
		+4\epsilon^{2}
		 +4\epsilon^{3}
		 \right) \eta
		+16\epsilon \log^{2}\epsilon\\
	=&0.
	\end{aligned}
\label{n=2,k=0}
\end{equation}
Solving the above equation we get
\begin{align*}
\eta=&\frac{-B \pm \sqrt{B^{2}-4AC}}{2A}\\
=&\frac{1}{2} \left(1+O\left(\epsilon^{2}\log^{2}\epsilon \right)\right)\cdot \left[4+8\epsilon\log^{2}\epsilon+O\left(\epsilon \log\epsilon\right)
\pm
\left(4+O\left(\epsilon \log\epsilon\right)\right)\right].
\end{align*}
So the roots of the equation $\eqref{n=2,k=0}$ are given by
\begin{align*}
	\eta^{(1)}_{(0)}
	= 4\epsilon \log^{2}\epsilon+O\left(\epsilon \log\epsilon\right),\quad
	\eta^{(2)}_{(0)} =4+4\epsilon \log^{2}\epsilon +O\left(\epsilon \log \epsilon  \right).
\end{align*}

\textbf{C. The case $n+2k=4$, i.e., $n=2$, $k= 1$ or $n=4$, $k=0$.}
\begin{itemize}
	\item[{\rm (i)}]  When $n=2,~k=1$, we have
\begin{align*}
u(r,\phi)&=\left(ar+br^{-1}+cr^3+dr\log r\right)\beta(\phi),\\
\frac{\pt u}{\pt r}&=\left(a-br^{-2}+3cr^2+d(\log r+1)\right)\beta(\phi),\\
\Delta u&=\left(8cr+2dr^{-1}\right)\beta(\phi),
\end{align*}
and so
	$$
	\left\{\begin{array}{l}
		\begin{aligned}
			&a+b+c=0, \\
			&\epsilon a+\epsilon^{-1} b+\epsilon^{3}c+\epsilon \log\epsilon d=0,\\
			&-\eta a+\eta b+\left(8-3\eta\right)c+\left(2-\eta\right)d=0,\\
			&\eta a-\eta \epsilon^{-2} b+\left(8\epsilon+3\eta \epsilon^{2}\right)c+\left(2\epsilon^{-1}+ \eta \log\epsilon +\eta \right)d=0.
		\end{aligned}
	\end{array}\right.
	$$
	\item[{\rm (ii)}]  When $n=4,~k=0$, we have
\begin{align*}
u(r,\phi)&=\left(a+br^{-2}+cr^2+d\log r\right)\beta(\phi),\\
\frac{\pt u}{\pt r}&=\left(-2br^{-3}+2cr+dr^{-1}\right)\beta(\phi),\\
\Delta u&=\left(8c+2dr^{-2}\right)\beta(\phi),
\end{align*}
and so
	$$
	\left\{\begin{array}{l}
		\begin{aligned}
			&a+b+c=0, \\
			&a+\epsilon^{-2} b+\epsilon^{2}c+ \log\epsilon d=0,\\
			&2\eta b+\left(8-2\eta\right)c+\left(2-\eta\right)d=0,\\
			&-2\eta \epsilon^{-3} b+\left(8+2\eta \epsilon\right)c+\left(2 \epsilon^{-2} +\eta \epsilon^{-1}  \right)d=0.
		\end{aligned}
	\end{array}\right.
	$$
\end{itemize}
One can check that the above two systems of linear equations are equivalent to each other. They have non-zero solutions if and only if $\eta$ satisfies
\begin{equation}\label{n=2,k=1}
	\begin{aligned}
		&\left(\epsilon \log\epsilon
		+\epsilon-2\epsilon^{3}
		-\epsilon^{5}\log\epsilon
		+\epsilon^{5} \right) \eta^{2}+
		\left(1
		-4\epsilon\log\epsilon
		-3\epsilon
		-4\epsilon^{2}
		+4\epsilon^{3}\right.\\
		&\left.~-4\epsilon^{4} \log\epsilon
		+3\epsilon^{4}
		-\epsilon^{5}
		\right) \eta
		-4+8\epsilon^{2}-4\epsilon^{4}
		=0.
	\end{aligned}
\end{equation}
%
%
Writing the equation \eqref{n=2,k=1} as $A\eta^2+B\eta+C=0$, we get
\begin{align*}
	B^2-4AC	=& \left(1+4\epsilon\log\epsilon\right)^{2}
	+10\epsilon
	+24\epsilon^{2} \log\epsilon
	+O\left(\epsilon^{2}\right)\\
	=& \left(1+4\epsilon\log\epsilon\right)^{2} \cdot \left[1+\frac{10\epsilon}{\left(1+4\epsilon\log\epsilon\right)^{2} }
	+\frac{24\epsilon^{2} \log\epsilon}{\left(1+4\epsilon\log\epsilon\right)^{2}}
	+O\left(\epsilon^{2}\right)
	\right],
\end{align*}
which implies
\begin{align*}
	&\sqrt{B^2-4AC}\\
	=&\left(1+4\epsilon\log\epsilon\right)
	\cdot \left(1+\frac{1}{2}\left(\frac{10\epsilon}{\left(1+4\epsilon\log\epsilon\right)^{2} }
	+\frac{24\epsilon^{2} \log\epsilon}{\left(1+4\epsilon\log\epsilon\right)^{2}}
	+O\left(\epsilon^{2}\right)
	\right)\right)\\
	=&1
	+4\epsilon \log\epsilon
	+5\epsilon
	-8\epsilon^{2} \log\epsilon
	+O\left(\epsilon^{2}\right).
\end{align*}
Besides, we note
\begin{align*}
   \frac{1}{2A}
   &=\frac{1}{2\epsilon (\log\epsilon+1)}\left(1+O(\epsilon^2 \log^{-1}\epsilon)\right)^{-1}\\
   &=\frac{1}{2\epsilon (\log\epsilon+1)}\left(1+O(\epsilon^2 \log^{-1}\epsilon)\right).
\end{align*}
So we see
\begin{align*}
	\eta&=\frac{1}{2\epsilon (\log\epsilon+1)}\left(1+O(\epsilon^2 \log^{-1}\epsilon)\right)
	\cdot \left[-1
	+4\epsilon\log\epsilon
	+3\epsilon
	+4\epsilon^{2}
	-4\epsilon^{3}\right.\\
	&\left.\quad+4\epsilon^{4} \log\epsilon
	-3\epsilon^{4}
	+\epsilon^{5}
	 \pm
	\left(1
	+4\epsilon \log\epsilon
	+5\epsilon
	-8\epsilon^{2} \log\epsilon
	+O\left(\epsilon^{2}\right)\right)
	\right].
\end{align*}
Thus,
\begin{align*}
	\eta^{(1)}_{(k)}
	&=\frac{1}{2\epsilon (\log\epsilon+1)}\left(1+O(\epsilon^2 \log^{-1}\epsilon)\right) \left(8\epsilon(\log \epsilon+1)-8\epsilon^2\log \epsilon+O(\epsilon^2)\right)\\
	&=4-4\epsilon
	+O\left(\epsilon \log^{-1}\epsilon\right),\\
	\eta^{(2)}_{(k)} &=\infty.
\end{align*}

For the last part of the proof, we first have
\begin{align*}
	\left|  \partial \left( \mathbb {B}^{n}_{1}  \setminus \overline{\mathbb {B}^{n}_{\epsilon}} \right)\right| ^{\frac{1}{n-1}}
	=
	\left|  \partial \mathbb {B}^{n}_{1} \right| ^{\frac{1}{n-1}} \left(1+\frac{1}{n-1}\epsilon^{n-1}+O\left(\epsilon^{2n-2}\right)\right).
\end{align*}
Next we will investigate the relationship between the spectrum of $\mathbb {B}^{n}_{1} \setminus \overline{\mathbb {B}^{n}_{\epsilon}}$ and that of $\mathbb {B}^{n}_{1}$ in each case. And as explained in Remark~\ref{exceptional-notation}, here we will not consider the cases of $n=2$ and $3$.
\begin{itemize}
	\item[{\rm (i)}]  When $n+2k \geq 5$ and $n \geq 4$,
	\begin{align*}
		&\eta_{(k)}\left(\mathbb {B}^{n}_{1} \setminus \overline{\mathbb {B}^{n}_{\epsilon}}\right) \left|  \partial \left( \mathbb {B}^{n}_{1}  \setminus \overline{\mathbb {B}^{n}_{\epsilon}} \right)\right| ^{\frac{1}{n-1}} \\
		=&  \left|  \partial \mathbb {B}^{n}_{1} \right|^{\frac{1}{n-1}} \left\{
		n+2k
		-\frac{1}{4} (n+2k)(n+2k-2)^{2}\epsilon^{n+2k-3}
		+O\left(\epsilon^{n+2k-2}\right)\right\} \\
		&\times
		\left(1+\frac{1}{n-1}\epsilon^{n-1}+O\left(\epsilon^{2n-2}\right)\right).
	\end{align*}
By comparing the orders of the terms $\epsilon^{n+2k-3}$ and $\epsilon^{n-1}$, we get the following three sub-cases.
\begin{itemize}
\item[{\rm (a)}]  When $n \geq 5$ and $k =0$, we have
\begin{align*}
	&\eta_{(0)}\left(\mathbb {B}^{n}_{1} \setminus \overline{\mathbb {B}^{n}_{\epsilon}}\right) \left|  \partial \left( \mathbb {B}^{n}_{1}  \setminus \overline{\mathbb {B}^{n}_{\epsilon}} \right)\right| ^{\frac{1}{n-1}} \\
	=& \left|  \partial \mathbb {B}^{n}_{1} \right|^{\frac{1}{n-1}} \left(n-\frac{1}{4} n(n-2)^{2}\epsilon^{n-3}
	+O\left(\epsilon^{n-2}\right)\right)\\
	<&n \left|  \partial \mathbb {B}^{n}_{1} \right|^{\frac{1}{n-1}}\\
	=&\eta_{(0)} \left(\mathbb {B}^{n}_{1} \right) \left|  \partial \mathbb {B}^{n}_{1} \right|^{\frac{1}{n-1}}.
\end{align*}
\item[{\rm (b)}]  When $n \geq 4$ and $k =1$, we deduce
\begin{align*}
	&\eta_{(1)}\left(\mathbb {B}^{n}_{1} \setminus \overline{\mathbb {B}^{n}_{\epsilon}}\right) \left|  \partial \left( \mathbb {B}^{n}_{1}  \setminus \overline{\mathbb {B}^{n}_{\epsilon}} \right)\right| ^{\frac{1}{n-1}} \\
	=& \left|  \partial \mathbb {B}^{n}_{1} \right|^{\frac{1}{n-1}}  \left\{n+2 +\left[\frac{n+2}{n-1}-\frac{1}{4} n^{2} (n+2)\right]\epsilon^{n-1}
	+O\left(\epsilon^{n}\right)\right\}\\
	<&\left(n+2\right) \left|  \partial \mathbb {B}^{n}_{1} \right|^{\frac{1}{n-1}}\\
	=&\eta_{(1)} \left(\mathbb {B}^{n}_{1} \right) \left|  \partial \mathbb {B}^{n}_{1} \right|^{\frac{1}{n-1}}.
\end{align*}
\item[{\rm (c)}]  When $n \geq 4$ and $k \geq 2$, we obtain
\begin{align*}
	&\eta_{(k)}\left(\mathbb {B}^{n}_{1} \setminus \overline{\mathbb {B}^{n}_{\epsilon}}\right) \left|  \partial \left( \mathbb {B}^{n}_{1}  \setminus \overline{\mathbb {B}^{n}_{\epsilon}} \right)\right| ^{\frac{1}{n-1}} \\
	=&\left|  \partial \mathbb {B}^{n}_{1} \right|^{\frac{1}{n-1}} \left(n+2k+\frac{n+2k}{n-1} \epsilon^{n-1}+O\left(\epsilon^{n}\right) \right)\\
	>&\left(n+2k\right) \left|  \partial \mathbb {B}^{n}_{1} \right|^{\frac{1}{n-1}}\\
	=&\eta_{(k)} \left(\mathbb {B}^{n}_{1} \right) \left|  \partial \mathbb {B}^{n}_{1} \right|^{\frac{1}{n-1}}.
\end{align*}
\end{itemize}
	\item[{\rm (ii)}]  When $n=4$ and $k=0$,
	\begin{align*}
		&\eta_{(0)}\left(\mathbb {B}^{4}_{1} \setminus \overline{\mathbb {B}^{4}_{\epsilon}}\right) \left|  \partial \left( \mathbb {B}^{4}_{1}  \setminus \overline{\mathbb {B}^{4}_{\epsilon}} \right)\right| ^{\frac{1}{3}} \\
		=& \left|  \partial \mathbb {B}^{4}_{1} \right|^{\frac{1}{3}} \left(4-4\epsilon +O\left(\epsilon \log^{-1}\epsilon \right)
		 \right)
		\times
		\left(1+\frac{1}{3} \epsilon^{3}+O\left(\epsilon^{6}\right)\right) \\
		=&\left|  \partial \mathbb {B}^{4}_{1} \right|^{\frac{1}{3}}  \left(4-4\epsilon+O\left(\epsilon \log^{-1}\epsilon \right)\right) \\
		<&4 \left|  \partial \mathbb {B}^{4}_{1} \right|^{\frac{1}{3}}\\
		=& \eta_{(0)}\left(\mathbb {B}^{4}_{1} \right) \left|  \partial \mathbb {B}^{4}_{1} \right|^{\frac{1}{3}}.
	\end{align*}
\end{itemize}
So the proof is complete.
\end{proof}

\subsection{The spectrum for cylinders}
Next we will give the proof of Theorem~\ref{thm4}.

\begin{proof}[Proof of Theorem~\ref{thm4}]
As in the proof of Theorem $\ref{thm2}$, we can obtain two systems of linear equations depending on the value of $k$.
\begin{itemize}
	\item[{\rm (i)}]  When $k=0$, we get
\begin{align*}
T'(s)&=d_2+2sd_3+3s^2d_4,\\
T^{(2)}(s)&=2d_3+6sd_4,\\
\Delta_{C_L}u&=T^{(2)}(s)\beta(x)=(2d_3+6sd_4)\beta(x),
\end{align*}
and so
	\begin{align*}
		\left\{\begin{array}{l}
			\begin{aligned}
				&d_{1}+Ld_{2}+L^{2}d_{3}+L^{3} d_{4}=0, \\
				&d_{1}-Ld_{2}+L^{2}d_{3}-L^{3} d_{4}=0, \\
				&-\eta d_{2}+ \left(2-2\eta L\right)  d_{3}+\left(6L-3\eta L^2\right)d_{4}=0,\\
				&\eta d_{2}+ \left(2-2\eta L\right) d_{3}+\left(3\eta L^2-6L\right)d_{4}=0.
			\end{aligned}
		\end{array}\right.
	\end{align*}
	There exists a non-zero solution of the above system if and only if
	\begin{align*}
		16L^{4} \eta^2-64L^{3} \eta+48L^{2}=0.
	\end{align*}
	So we get
	\begin{align*}
		\eta^{(0;1)}=\frac{1}{L}, \quad
		\eta^{(0;2)}=\frac{3}{L}.
	\end{align*}
	\item[{\rm (ii)}]  When $k \geq 1$, letting $c_k=\lambda_k(M)^{1/2}$, we get
\begin{align*}
T'(s)&=c_ke^{c_{k}s} d_{1} +(1+c_ks)e^{c_{k}s} d_2  -c_ke^{-c_{k}s} d_{3} +(1-c_ks) e^{-c_{k}s} d_{4},\\
T^{(2)}(s)&=c_k^2e^{c_{k}s} d_{1} +(2c_k+c_k^2s)e^{c_{k}s} d_2  +c_k^2e^{-c_{k}s} d_{3} +(-2c_k+c_k^2s) e^{-c_{k}s} d_{4},\\
\Delta_{C_L}u&=(T^{(2)}(s)-c_k^2T(s))\beta(x)\nonumber \\
&=(2c_ke^{c_{k}s} d_2 -2c_k e^{-c_{k}s} d_{4})\beta(x),
\end{align*}
and so
	\begin{align*}
		\left\{\begin{array}{l}
			\begin{aligned}
				&e^{c_{k} L}d_{1}
				+L e^{c_{k} L}d_{2}
				+e^{-c_{k} L}d_{3}
				+L e^{-c_{k} L}d_{4}=0, \\
				&e^{-c_{k} L}d_{1}
				-L e^{-c_{k} L}d_{2}
				+e^{c_{k} L}d_{3}
				-L e^{c_{k} L}d_{4}=0, \\
				&-\eta c_{k} e^{c_{k} L}d_{1}
				+\left(2c_{k} e^{c_{k} L}-\eta e^{c_{k} L}-\eta c_{k} L e^{c_{k} L}\right) d_{2}\\
				&+\eta c_{k} e^{-c_{k} L} d_{3}
				-\left(2c_{k} e^{-c_{k} L}+\eta e^{-c_{k} L}-\eta c_{k} L e^{-c_{k} L}\right)d_{4}=0,\\
				&\eta c_{k}e^{-c_{k}  L}d_{1}
				+\left(2c_{k} e^{-c_{k} L}+\eta e^{-c_{k} L}-\eta c_{k} L e^{-c_{k} L}\right) d_{2}\\
				&-\eta c_{k} e^{c_{k} L} d_{3}
				-\left(2c_{k} e^{c_{k} L}-\eta e^{c_{k} L}-\eta c_{k} L e^{c_{k} L}\right)d_{4}=0.
			\end{aligned}
		\end{array}\right.
	\end{align*}
	The above system of linear equations has a non-zero solution if and only if $\eta$ satisfies
	\begin{align*}
		&\left[\left(e^{4c_{k} L}-1\right)^2-16c_{k}^{2}L^{2} e^{4c_{k} L}\right] \eta^{2}
		+\left[32 c_{k}^{2} L e^{4c_{k} L}-4c_{k} \left(e^{8c_{k} L}-1\right)\right] \eta\\
		+&4c_{k}^{2} \left(e^{4c_{k}  L}-1\right)^2
		=0.
	\end{align*}
	Thus, the eigenvalues $\eta^{(k)}$ are given by
	\begin{align*}
		\eta^{(k;1)}=\frac{2c_{k} \left(e^{2 L c_{k}}-1\right)^2}{e^{4Lc_{k}}-4Lc_{k} e^{2Lc_{k}}-1}, \quad
		\eta^{(k;2)}=\frac{2c_{k} \left(e^{2 L c_{k}}+1\right)^2}{e^{4Lc_{k}}+4Lc_{k} e^{2Lc_{k}}-1}.
	\end{align*}
\end{itemize}

Then the corresponding eigenfunctions can be given as in the theorem.

\end{proof}

\section{Results for the fourth-order Steklov eigenvalue problem of the type three}
In this section, we derive a sharp upper bound for the first non-zero eigenvalue of the fourth-order Steklov eigenvalue problem~\eqref{problem3} on a star-shaped and mean convex domain in $\mathbb{R}^{n}$.

\begin{proof}[Proof of Theorem~\ref{thm5}] By translating the origin, we may assume that
\begin{align*}
\int_{\pt \Omega} x_i d\sigma=0,\quad i=1,\dots, n.
\end{align*}
Then using $x_i$ ($1\leq i\leq n$) as test functions in the variational characterization~\eqref{variational-characterization} for $\rho_1$ we get
\begin{align*}
\rho_1  \int_{\pt \Omega} x_i^2 d\sigma \leq \tau \int_\Omega dx,\quad i=1,\dots, n,
\end{align*}
which after summation over $i=1,\dots,n$ yields
\begin{align*}
\rho_1 \int_{\pt \Omega} |x|^2 d\sigma \leq n \tau |\Omega|.
\end{align*}
Now we employ the inequality (1.4) in \cite{KW23} to derive
\begin{align*}
|\pt \Omega| \int_{\pt \Omega} |x|^2 d\sigma &\geq \left(\int_{\pt \Omega} |x|d\sigma \right)^2\\
&\geq \left(\frac{n-1}{n}\omega_{n-1}\left(\frac{|\pt \Omega|}{\omega_{n-1}}\right)^{n/(n-1)}+|\Omega|\right)^2,
\end{align*}
where $\omega_{n-1}=|\SS^{n-1}|$, with the equality if and only if $\Omega$ is a Euclidean ball centered at the origin. Using this inequality, we obtain
\begin{align}
\rho_1\leq n\tau |\Omega||\pt \Omega|\left(\frac{n-1}{n}\omega_{n-1}\left(\frac{|\pt \Omega|}{\omega_{n-1}}\right)^{n/(n-1)}+|\Omega|\right)^{-2}.
\end{align}
Furthermore, if the equality holds, we see that $\Omega$ is a Euclidean ball centered at the origin. The proof is complete.
\end{proof}

\begin{rem}
Denote by $p_k$ the $k$th eigenvalue for the second-order Steklov eigenvalue problem. Using the min-max variational principles for $p_k$ and $\rho_k$, we may prove
\begin{align*}
\tau p_k\leq \rho_k,\quad k=0,\dots.
\end{align*}
In fact, we see
\begin{align*}
\rho_k(\Omega)&=\inf_{V_{k}\subset H^2(\Omega),\ \mathrm{dim}(V_{k})=k} \sup_{u \in  V_{k},\ u|_{\pt \Omega}\neq 0}\frac{\int_\Omega |D^2 u|^2+\tau |\nabla u|^2 dx}{\int_{\pt \Omega}u^2 d\sigma}\\
&\geq \tau \inf_{V_{k}\subset H^2(\Omega),\ \mathrm{dim}(V_{k})=k} \sup_{u \in  V_{k},\ u|_{\pt \Omega}\neq 0}\frac{\int_\Omega |\nabla u|^2 dx}{\int_{\pt \Omega}u^2 d\sigma}\\
& \geq \tau \inf_{V_{k}\subset H^1(\Omega),\ \mathrm{dim}(V_{k})=k} \sup_{u \in  V_{k},\ u|_{\pt \Omega}\neq 0}\frac{\int_\Omega |\nabla u|^2 dx}{\int_{\pt \Omega}u^2 d\sigma}\\
&=\tau p_k(\Omega).
\end{align*}
So as a corollary, we get after using the Young inequality
\begin{align*}
p_1&\leq n  |\Omega||\pt \Omega|\left(\frac{n-1}{n}\omega_{n-1}\left(\frac{|\pt \Omega|}{\omega_{n-1}}\right)^{n/(n-1)}+|\Omega|\right)^{-2}\\
&\leq  \frac{n^{1-2/n}\omega_{n-1}^{2/n} |\Omega|^{1-2/n}}{|\pt \Omega|},
\end{align*}
the result in \cite{KW23}.
\end{rem}
\noindent \textbf{Conflict of interest}: The authors have no competing interests to declare that are relevant to the content of this article.



\bibliographystyle{Plain}

\end{document}